\newtheorem{theorem}{Theorem}
\newtheorem{proposition}{Proposition}
\newtheorem{corollary}{Corollary}
\newtheorem{remark}{Remark}
\newtheorem{definition}{Definition}
\newcommand{\R}{\mathbb{R}}
\newcommand{\Op}{\mathcal{A}}
\newcommand{\Ap}{\overline{\mathcal{A}}}
\newcommand{\y}{\mathrm{y}} 
\newcommand{\ypure}{y} 
\newcommand{\xd}{x^\dagger} 
\newcommand{\KS}{\mathcal{K}} 
\newcommand{\RS}{\mathcal{R}}
\newcommand{\KRS}{\mathcal{KR}}
\newcommand{\X}{\mathcal{X}}
\newcommand{\Q}{\mathcal{Q}} 
\newcommand{\be}{\begin{equation}}
\newcommand{\ee}{\end{equation}}
\newcommand{\PS}{\mathcal{P}}
\newcommand{\spa}{{\rm span}} 
\newcommand{\POP}[1]{P_{#1}}
\newcommand{\pone}{{s}}
\newcommand{\ptwo}{{t}}
\newcommand{\pold}{p_{\rm old}} 
\newcommand{\MAXIT}{{\rm MAXIT}} 
\newcommand{\grdiag}{\kappa}
\newcommand{\factor}{\rho}
\newcommand{\vg}{\vec{g}}
\newcommand{\vgm}{g}
\DeclareMathOperator*{\argmin}{argmin}
\newcommand{\skp}[1]{\langle #1 \rangle } 
\newcommand{\tmat}[3]{\begin{bmatrix}#1_1 & #2_2 &#3_3 & & & & &\\ 
 #2_2  & #1_2 & #2_3 & & & & &\\
#3_3 & #2_3 & #1_3 & #2_4 & #3_5 & & &\\ 
& & #2_4 & #1_4 & #2_5 & & &\\ 
& & #3_5 & #2_5 & #1_5 & #2_6 & #3_7 &\\ 
& &  &   &  #2_6  & #1_6 & #2_7 &\\ 
& &  &   &  #3_7  & #2_7 &  #1_7 & \ldots \\
 & &  &   &    &  \ldots&  \ldots  &  \ldots \\
  \end{bmatrix} }
\title{A rational conjugate gradient method for linear 
ill-conditioned problems}
\author{Stefan Kindermann\footnotemark[2] and Werner Zellinger\footnotemark[3]}
\begin{document}
\renewcommand{\thefootnote}{\fnsymbol{footnote}}
\footnotetext[2]{Industrial Mathematics Institute, Johannes Kepler University Linz, Austria. 
({\tt kindermann@indmath.uni-linz.ac.at}).}
\footnotetext[3]{Johann Radon Institute for Computational and Applied Mathematics (RICAM), 
Austrian Academy of Science, Linz, Austria. 
({\tt Werner.Zellinger@oeaw.ac.at})} 
\maketitle
\begin{abstract}
We consider linear ill-conditioned operator equations in a Hilbert space setting. 
Motivated by the aggregation method, we consider approximate solutions constructed  
from linear combinations of Tikhonov regularization, which amounts to finding 
solutions in a rational Krylov space. By mixing these with usual Krylov spaces, we
consider least-squares problem in these mixed rational  spaces. Applying the Arnoldi method leads
to a sparse, pentadiagonal representation of the forward operator, and we introduce 
the Lanczos method for solving the least-squares problem by factorizing this matrix. 
Finally, we present an equivalent conjugate-gradient-type method that does not rely 
on explicit orthogonalization but uses short-term recursions and Tikhonov regularization
in each second step. We illustrate the convergence and regularization properties by some numerical examples.
\end{abstract} 
\renewcommand{\thefootnote}{\arabic{footnote}}
\section{Introduction} 
The setting of this article are linear ill-posed problems stated in Hilbert spaces. 
That is, given a compact forward operator $A :X \to Y$ between Hilbert spaces $X,Y$ and 
data $y \in Y$, a standard idea to find (generalized) solutions $x$ of $A x = y$ is 
the least-squares approach: 
\[ \min_{x \in X} \|A x -y \|. \]
In this work, we are particular interested in the 
case that $A$ represents an ill-posed or ill-conditioned forward operator such that 
direct method usually lead to useless solutions, but regularization has to be 
employed. One of the most popular regularization methods in this case is Tikhonov regularization 
that calculates approximate solution to the forward problem by 
\begin{equation}\label{defTik}
 x_{\alpha}:= (A^*A  + \alpha I )^{-1}A^*y, 
\end{equation} 
with $\alpha>0$ being a regularization parameter that has to be chosen 
appropriately. Since this involves solving a linear system, especially in high-dimensional 
cases, iterative methods are the state-of-the-art, 
 for instance, highly popular are Krylov-space methods (see, e.g., \cite{Regtools}), 
which in this case lead to applying the conjugate gradient (CG) method \cite{cg} to the normal equations
(CGNE)
$A^* A x = A^* y$ \cite{EHN,Hankebook}. By using a discrepancy principle as stopping 
criterion, this CGNE method acts as a regularization method. 

There are two main sources of inspiration for our work. The first one is the
so-called aggregation method  \cite{ChenPere15}, which improves Tikhonov regularization 
by constructing linear combinations of several $x_{\alpha_i}$ in \eqref{defTik} 
and minimizing the least-squares functional over such combinations. 
This has, e.g.,  been successfully used in combination with heuristic parameter choice rules 
\cite{PeKiPi}, inverse problems in geophysics \cite{Tka}, and in particular in 
domain adaption in learning \cite{agglearning,Pereveryzevbook}.

It is not difficult to rephrase  this approach as least-squares minimization 
over a rational Krylov space. We discuss this method in Section~\ref{sec:agg}.
We show below that the resulting residual is always smaller than that for each 
individual Tikhonov regularization and also smaller than that of the CGNE method
with the same number of steps. However, the aggregation method is non-iterative, and 
by adding new $x_{\alpha_i}$, one cannot make efficient use of 
the previous solutions. The main point of this article is to state a
similar (but not equivalent) iterative method that also uses rational Krylov spaces 
and is practically as efficient as the aggregation method. 

To do so, we employ the second source of inspiration, namely the use of mixed 
rational Krylov spaces that have been investigated by Prani\'c and Reichel~\cite{prarei}. 
It has been shown there that by mixing the rational spaces of the aggregation method 
and the usual Krylov spaces of the CGNE method, one finds  short recurrences of the residuals. 
We introduce the corresponding spaces 
in Section~\ref{sec:agg} and prove a pentadiagonal representation of the 
forward operator in the mixed rational space in Section~\ref{sec:arnoldi}. 

As a consequence of the sparse representation, one can find an iterative solution 
method for the least-squares problem in the mixed rational space by using 
Arnoldi orthogonalization and essentially an $LU$-factorization of the sparse 
matrix. This leads to the Lanczos method defined in Section~\ref{sec:lanczos}. 
The drawback of this method is the corresponding memory requirement since all 
orthogonal basis vectors have to be saved. We therefore propose an equivalent 
iterative method that does not need this but is based on short recurrence
relations and Tikhonov regularization. The corresponding algorithm, 
called rational CG method, is described in Section~\ref{sec:ratcg} and is considered the 
main contribution of this work. We prove that the method computes the same sequence 
of approximate solutions as the Lanczos method, which are minimizers of the least-squares 
problem in the mixed rational Krylov spaces. 

Let us remark that we do not prove regularization properties of the Lanczos and rational CG algorithms
but leave this for future work. Note that the proof that the CGNE method is a regularization 
method 
 when combined with the discrepancy principle 
(see, e.g., the classical monograph \cite{Hankebook} or \cite{Nem1,Nem2}) 
 is quite involved, and we expect the same 
to be true for the rational counterpart. 

Finally, let us mention some related work. As far as the authors know, the proposed rational 
CG method is original, however, there exist some related methods in the literature. 
Ruhe \cite{Ruhe} has introduced rational Krylov spaces for eigenvalues computations \cite{Ruhe}. 
G\"uttel \cite{Guttel}  has used a quite similar methods as our Lanczos algorithms for solving (not necessarily symmetric) linear problems, but without   
mixing the spaces.
Grimm \cite{Grimm} has studied regularization properties of such rational CG method, but he uses only constant 
regularization parameters, whereas we allow varying ones. Compared to Grimm, this makes the algorithm 
more complicated (we get a three-term recursion in the odd steps), but our method is more flexible and is comparable 
to the aggregation method. However,  the 
derivation and analysis is also more involved.
 Prani\'{c} et al.~\cite{prareirod} proposed a 
Lanczos-type method for non-symmetric linear equation that is essentially identical to the 
Lanczos method in this article, which  can thus be considered a specialization of  
that in \cite{prareirod} to the symmetric case. 

What is the main novelty in our work compared to \cite{prareirod,Guttel,Grimm} is the rational CG method 
in Section~\ref{sec:ratcg}, which we regard as a nontrivial extension of the cited work. It is simpler 
to implement than the Lanczos method and seems to be more robust  numerically.

\section{The aggregation method and rational Krylov spaces}\label{sec:agg}
For  notational reasons we introduce the system matrix (or operator) for the 
normal equations and the corresponding right-hand side:
\be\label{defop} \Op:= A^*A \qquad \y := A^*y, \qquad \Ap : = AA^* \, .\ee 
In the sequel we denote the Hilbert-space inner products in $X$ or $Y$ by $\skp{.,.}$. 
We consider  a sequence of pairwise disjoint nonnegative regularization parameter 
\[ \alpha_1, \ldots \alpha_n,\ldots \qquad \alpha_i >0,  \qquad \alpha_i \not = \alpha_j \quad \text{for } i\not = j.\]
For any such $\alpha_i$, we define the associated Tikhonov regularization 
$x_{\alpha_i}$ defined by \eqref{defTik}.
The method of aggregation by a linear functional strategy of Chen and Pere\-very\-zev~\cite{ChenPere15}
is based on improving the approximate solutions $x_{\alpha_i}$ by 
building finite linear combination of the already computed solutions $(x_{\alpha_i})_{i=1}^N$ 
and minimizing  $\| \sum_{i=1}^N c_i x_{\alpha_i} - x^*\|_X $ 
over the coefficients $c_i$ and where $x^*$ is an approximation of the true solution $\xd$ using  
a linear functional strategy. Since this requires an approximation of $\xd$, which is not available, 
another variant of this idea is to minimize the residual instead. 
This has been proposed in \cite{aggregation}, \cite[Section~4.4]{Pereveryzevbook} in learning theory. 
It amounts to minimizing the least-squares functional \mbox{$\| A \sum_{i=1}^N c_i x_{\alpha_i}  -y\|$}
over the coefficients $c_i$ yielding an approximation $\sum_{i=1}^N c_i x_{\alpha_i}$ with minimal 
residual under all such linear combinations. A similar idea 
is used in Anderson acceleration \cite{Anderson} but with convex combinations of $x_{\alpha_i}$ instead of linear ones.  

The motivation for this article is the simple observation that the linear combination of $x_{\alpha_i}$ is 
an element of a rational Krylov space, and thus the aggregation strategy is  a generalization of 
classical Krylov-space minimization. 
Let us define the usual Krylov space of dimension $n$ for the above least-squares problem as it is used in 
the CGNE method (using the notation~\eqref{defop}):
\[ \KS^n := \spa\{\y,\Op \y, \Op^2 \y, \ldots, \Op^{n-1} \y \}.  \]
We define 
\begin{equation}\label{eq:xks} x_{\KS,n}:= \argmin_{x \in \KS^{n}} \|  A x - y\|.  \end{equation}
It is well known \cite{Hankebook,EHN} that $x_{\KS,n}$ can be calculated  recursively by  the $n$th step of the 
CGNE method.

In contrast, the acceleration method uses elements in the following rational Krylov space:
Define 
\[ f_{\alpha}(\lambda) = \frac{1}{\lambda + \alpha}. \]
Then, the rational Krylov space of dimension $n$ is given as 
\[ \RS^{n} = \spa\{f_{\alpha_1}(\Op) \y, f_{\alpha_2}(\Op)  \y, \ldots, f_{\alpha_n}(\Op)  \y \}.  \]
The acceleration method of \cite{ChenPere15}  computes 
\begin{equation}\label{eq:xrs} x_{\RS,n}:= \argmin_{x \in \RS^{n}} \|  A x - y\|.  \end{equation}
In contrast to the case for $\KS$ and the CGNE method, the solutions here cannot be calculated recursively, 
but the full Gramian matrix $G$ has to be set up and inverted \cite{Pereveryzevbook}: Formally,  the method 
computes 
\begin{equation}\label{aggmeth} 
\begin{aligned}
G &:= \skp{ A x_{\alpha_i}, A x_{\alpha_j}}_{i,j = 1,N} &\quad g &= \skp{y,A x_{\alpha_i}}_{i=1, N}, \\
 c&= G^{-1} g,& \quad  x_{\RS,n} &= \sum_{i=1}^N  c_i  x_{\alpha_i}.  
 \end{aligned} \end{equation} 
Here, neither the invertibility of the Gramian $G$ nor its well-conditioning is automatically guaranteed but has
to be monitored and algorithmically controlled. Although, the computational complexity of the method 
is mainly dominated by the calculation of the Tikhonov-regularized solutions $x_{\alpha_i}$, 
compared to the CGNE method, it has the drawback that whenever we increase the Krylov space 
by adding new elements $x_{\alpha_i}$, the full Gramian matrix has to be recalculated and inverted. 
It does not seem possible to design an equivalent iterative method that computes $x_{\RS,n}$
from previous solutions. 

The main contribution of this article is to show that such an iterative  algorithm with a short-term recursion 
for the acceleration method is, however, possible if we modify the rational Krylov space $\RS^{n}$ slightly.   
For this we  employ a fruitful theorem of Prani\'{c} and Reichel 
\cite{prarei,prareirod}, according to which  short  recursions are obtained by alternately mixing the Krylov spaces $\RS^{n}$ and 
$\KS^{n}$ . 
Thus, we define the following mixed rational Krylov space: 
\begin{equation} \begin{split} 
 \KRS^{2k}&:= \spa\big\{\y, f_{\alpha_1}(\Op) \y, \Op y,  f_{\alpha_2}(\Op), 
  \Op^2 y, \ldots ,\\
  & \qquad \qquad f_{\alpha_k}(\Op) \y \big\}  \\
   \KRS^{2k+1}&:= \spa\big\{\y, f_{\alpha_1}(\Op) \y, \Op y,  f_{\alpha_2}(\Op), 
  \Op^2 y, \ldots , \\ 
   & \qquad \qquad   f_{\alpha_k}(\Op) \y, \Op^k y  \big \}. 
\end{split} 
\end{equation}
Hence, 
\[ \KRS^n = \begin{cases} \RS^k \cup \KS^k &  n = 2 k, \\ 
             \RS^k \cup \KS^{k+1} &  n = 2 k +1. \\ 
            \end{cases}
\]
Therefore, the spaces $\KRS^n$ are build by adding alternately an element of the usual Krylov space $\KS$ and 
the rational Krylov space $\RS$. In the following we refer to the even iteration numbers $n = 2k$ as
{\em rational steps} (where a term $(\Op +\alpha_k I)^{-1}\y$ is added) and to the odd iterations 
$n = 2k +1$ as {\em Krylov steps} (where a term $\Op^k \y$ is added).

In analogy to the above, we define 
\begin{equation}\label{eq:xkrs} x_{\KRS,n}:= \argmin_{x \in \KRS^{n}} \|  A x - y\|.  \end{equation}
Algorithm~\ref{alg:ratcg} below provides a recursive method, called rational CG, that computes 
these minimizers recursively. 
 
\subsection{Rational representation}
Before we develop the method, we study a representation of the Krylov spaces by rational functions: 
Denote by $\PS^n$ the space of all polynomials of degree less than $n$. 
 (We denote by 
$ \lfloor . \rfloor$ the floor function, i.e., the rounding to the next smaller integer). 
\begin{proposition}
The Krylov spaces defined above have the following representation: 
\begin{alignat}{2}  
\KS^n &= p_{n-1}(\Op) \y, & \quad &p_{n-1} \in \PS^{n-1},  \nonumber \\
\RS^n & = r_{n-1} (\Op) \y, & &r_n(x) =  \frac{p_{n-1}(x)}{\Pi_{i=1}^{n} (x + \alpha_i) },  \qquad 
p_{n-1} \in \PS^{n-1},  \label{rsnum} \\
\KRS^n & = s_{n-1}(\Op) \y,   &
&s_{n-1}(x) = \frac{p_{n-1}(x)}{\Pi_{i=1}^{k} (x + \alpha_i) },  \qquad 
p_{n-1} \in \PS^{n-1},   k = \lfloor \frac{n}{2} \rfloor\, .  \label{zzz}
\end{alignat}
\end{proposition}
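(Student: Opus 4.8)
The plan is to read each of the three identities as the statement that a finite-dimensional span of vectors $g_i(\Op)\y$ coincides with the set $\{\,g(\Op)\y : g \in \mathcal{V}\,\}$, where $\mathcal{V}$ is a space of rational functions with a fixed denominator and a free polynomial numerator. I would prove each by two inclusions, obtained by passing back and forth between the given generators and a convenient basis of $\mathcal{V}$. The one structural observation that makes every such substitution legitimate is that all operators in sight are functions of the \emph{single} operator $\Op = A^*A$, which is self-adjoint and nonnegative, while each $\alpha_i>0$, so that $-\alpha_i\notin\sigma(\Op)$ and $(\Op+\alpha_i I)^{-1}$ is bounded. Hence any identity between scalar rational functions that holds off the poles $\{-\alpha_i\}$ transfers verbatim to the operators: after clearing denominators one obtains a genuine \emph{polynomial} identity, which holds for $\Op$ because it holds on $\sigma(\Op)\subseteq[0,\infty)$, and one then multiplies by the commuting, invertible factors $(\Op+\alpha_i I)^{-1}$.

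The $\KS^n$ case is immediate, since the monomials $1,x,\dots,x^{n-1}$ form a basis of the polynomials of degree less than $n$, and applying $g\mapsto g(\Op)\y$ to this basis returns exactly the spanning set $\{\y,\Op\y,\dots,\Op^{n-1}\y\}$ of $\KS^n$. For $\RS^n$ the forward inclusion is equally direct: each $f_{\alpha_i}(x)=(x+\alpha_i)^{-1}=\Pi_{j\neq i}(x+\alpha_j)\big/\Pi_{j=1}^{n}(x+\alpha_j)$ already has the claimed shape, with numerator of degree $n-1$, and the target set is linear. The reverse inclusion is where the hypothesis $\alpha_i\neq\alpha_j$ first does work: given any $p(x)\big/\Pi_{i=1}^{n}(x+\alpha_i)$ with $\deg p\le n-1$, the denominator has $n$ \emph{simple} roots $-\alpha_i$, so partial fractions give $p(x)\big/\Pi_{i=1}^{n}(x+\alpha_i)=\sum_{i=1}^{n}c_i(x+\alpha_i)^{-1}$, exhibiting the element as a combination of the $f_{\alpha_i}(\Op)\y$. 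The two inclusions already yield the asserted equality; a dimension count ($n$ spanning vectors against an $n$-dimensional numerator space) is a consistency check.

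The mixed space $\KRS^n$ is the crux. Writing $k=\lfloor n/2\rfloor$, I would put every generator over the common denominator $\Pi_{i=1}^{k}(x+\alpha_i)$: the Krylov generators $\Op^j\y$ acquire the numerator $x^j\,\Pi_{i=1}^{k}(x+\alpha_i)$, of degree $j+k$, while the rational generators $f_{\alpha_i}(\Op)\y$ acquire the numerator $\Pi_{l\neq i}(x+\alpha_l)$, of degree $k-1$. The statement then reduces to the claim that these numerators span all polynomials of degree less than $n$, and I expect this spanning/independence assertion to be the main obstacle. I would settle it by a leading-term (triangularity) argument combined once more with distinctness of the $\alpha_i$: the $k$ rational numerators $\Pi_{l\neq i}(x+\alpha_l)$ are, up to scaling, the Lagrange basis for the nodes $-\alpha_l$ (their evaluation matrix at those nodes is diagonal and nonsingular), hence already span every polynomial of degree at most $k-1$; meanwhile the Krylov numerators $x^j\,\Pi_{i=1}^{k}(x+\alpha_i)$ have strictly increasing leading degrees $k,k+1,\dots$, so they are linearly independent modulo that lower-degree space. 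Counting the generators ($k$ rational ones and either $k$ or $k+1$ Krylov ones, for a total of $n$) shows they are linearly independent and therefore span the full $n$-dimensional space of numerators of degree less than $n$. Dividing by the fixed denominator and invoking the functional-calculus remark of the first paragraph gives the representation \eqref{zzz}, with the even/odd bookkeeping ($n=2k$ versus $n=2k+1$) accounting for the single extra Krylov term $\Op^k\y$.
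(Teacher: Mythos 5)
Your proof is correct, and in both nontrivial cases it takes a genuinely different route from the paper's. For $\RS^n$, the paper argues by induction on $n$: it writes a general element of $\RS^n$ as an element of $\RS^{n-1}$ plus $c_n/(x+\alpha_n)$, and for the converse inclusion chooses $c_n$ explicitly so that $p_{n-1}(x)-c_n\Pi_{i=1}^{n-1}(x+\alpha_i)$ vanishes at $-\alpha_n$ and hence factors as $(x+\alpha_n)p_{n-2}(x)$; you instead invoke the partial-fraction decomposition at the $n$ simple poles $-\alpha_i$, which uses the distinctness hypothesis in one stroke and dispenses with the induction. For $\KRS^n$, the paper splits an arbitrary numerator by Euclidean division, $p_{n-1}(x)=\Pi_{i=1}^{k}(x+\alpha_i)\,q(x)+r(x)$ with $\deg r\le k-1$ and $\deg q\le m-1$, which immediately separates the polynomial and rational contributions; you instead place all $n$ generators over the common denominator and prove the resulting numerators form a basis of the $n$-dimensional numerator space, combining the Lagrange-type numerators $\Pi_{l\neq i}(x+\alpha_l)$ at the nodes $-\alpha_i$ (spanning degrees up to $k-1$) with the strictly increasing leading degrees $k,\dots,n-1$ of the Krylov numerators $x^{j}\Pi_{i=1}^{k}(x+\alpha_i)$. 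Your argument buys a little more than the paper's: it establishes that the $n$ generating rational functions are linearly independent \emph{as functions} (which is of course weaker than independence of the vectors $g(\Op)\y$ — that is the separate breakdown issue of Definition~\ref{def:breakdown}), and your opening remark that scalar identities off the poles transfer to $\Op$ because $\sigma(\Op)\subseteq[0,\infty)$ while $\alpha_i>0$ makes rigorous a functional-calculus step the paper leaves implicit; the paper's division trick, on the other hand, settles the converse inclusion for the mixed space in a single line. Both treatments handle the even/odd bookkeeping ($m=k$ versus $m=k+1$) identically.
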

\begin{proof}
The case for $\KS^n$ is obvious as is the case $\RS^1$. By induction, let the statement be true for $\RS^{n-1}$. 
Then, by construction,  $\RS^n$ has a  representation  $\RS^n =  \tilde{r}_{n} (\Op) \y$ with 
\begin{equation}\label{eq:rep1} \tilde{r}(x)  =  \frac{p_{n-2}(x)}{\Pi_{i=1}^{n-1} (x + \alpha_i) }  + \frac{c_n}{x +\alpha_n} = 
 \frac{p_{n-2}(x) (x +\alpha_{n}) 
+ c_{n}\Pi_{i=1}^{n-1} (x + \alpha_i) }{\Pi_{i=1}^{n} (x + \alpha_i) }
\end{equation}
and $p_{n-2} \in \PS^{n-1}$.
The denominator represents an element in $\PS^{n-1}$, and, given an arbitrary polynomial $p_{n-1}(x)$ in $\PS^{n-1}$, 
take $ c_n =-\frac{1}{\Pi_{i=1}^{n-1} (-\alpha_{n} + \alpha_i) } p(-\alpha_{n})$. Then 
$ p_{n-1}(x) - c_{n} \Pi_{i=1}^{n-1} (x + \alpha_i)$  has a root at $-\alpha_n$ and thus can be factorized 
as $(x+ \alpha_n) p_{n-2}(x)$ with $p_{n-2} \in  \PS^{n-2}$. This allows one to represent $p_{n-1}(x)$ in 
the form of the numerator of \eqref{eq:rep1}. Thus the representation of $\RS^n$ is shown. 
From these results, it follows that the rational function $s_{n-1}(x)$ for  $\KRS^n$  is given by 
\begin{align*} s_{n-1}(x) &=  p_{m-1}(x) +   \frac{q_{k-1}(x)}{\Pi_{i=1}^{k} (x + \alpha_i) }   \\
& = 
\frac{1}{{\Pi_{i=1}^{k} (x + \alpha_i) }} \left(  p_{m-1}(x) {\Pi_{i=1}^{k} (x + \alpha_i) } + q_{k-1}(x) \right),  \end{align*} 
where   $m = k$  if  $n = 2k$  and $m = k +1$  if  $n = 2k +1$.  
Thus, in any case the numerator is an element in $\PS^{n-1}$.   Conversely, given a polynomial $p_{n-1}$, a 
factorization by $\Pi_{i=1}^{k} (x + \alpha_i) $ yields 
\[ p_{n-1}(x) = \Pi_{i=1}^{k} (x + \alpha_i)  q(x) + r(x), \] 
where $r$ has degree at most $k-1$ and $q$ has degree at most $n-1 -k = m-1$. Dividing by $\Pi_{i=1}^{k} (x + \alpha_i) $,
the representation of  $s_{n-1}$ via  $\frac{p_{n-1}(x)}{\Pi_{i=1}^{k} (x + \alpha_i) }$ can be written as a sum of elements 
of the Krylov space and the rational Krylov space. 
\end{proof}

In a similar way, we may define the residual spaces of the normal equations 
(note that this means $A^*A x  - A^*y$) 
for the mentioned method. 
For each of the above Krylov spaces, we define 
\begin{equation}\label{eq:resspace} \Q_{\mathcal{X}} := \spa\{\Op x-\y| x \in \mathcal{X}  \} 
 \qquad \mathcal{X} \in \{ \KS^n, \RS^n, \KRS^n \}. 
\end{equation} 
Let  $\PS_{1}^n$ be  the space of polynomials $p_n(x)$ of degree at most $n$ that satisfy 
\[ p_n(0) = 1. \] 
\begin{proposition}\label{prop2}
Let $\Q_{\KS^n}$. $\Q_{\RS^n}$, $\Q_{\KRS^n}$ be the residual spaces in \eqref{eq:resspace}. Then 
\begin{align}  
\Q_{\KS^n}  &= p_{n}(\Op) \y, \qquad p_{n} \in \PS_1^{n},  \nonumber \\
\Q_{\RS^n} & = r_{n} (\Op) \y, \qquad r_n(x) =  \frac{p_{n}(x)}{\Pi_{i=1}^{n} (\frac{x}{\alpha_i} + 1) },  \qquad 
p_{n} \in \PS_1^{n}, \label{resrsnum}\\
\Q_{\KRS^n} & =  s_n(\Op) \y, \quad 
s_n(x) = \frac{p_n}{\Pi_{i=1}^{k} (\frac{x}{\alpha_i} + 1) }, \qquad  p_{n} \in \PS_1^{n}, 
k = \lfloor \frac{n}{2} \rfloor\, . \label{reskrsnum}
\end{align}
\end{proposition}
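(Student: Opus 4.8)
The plan is to read off all three residual representations directly from the previous proposition. The common mechanism is this: by that proposition every $x\in\mathcal{X}$ (for $\mathcal{X}\in\{\KS^n,\RS^n,\KRS^n\}$) has the form $x=h(\Op)\y$, where $h$ ranges over exactly the classes of rational functions described there. Hence the normal-equation residual is
$\Op x-\y=(\Op h(\Op)-I)\y=\phi(\Op)\y$ with $\phi(\lambda)=\lambda h(\lambda)-1$, and the whole argument reduces to tracking three features of $\phi$ as $h$ varies: its degree, its pole structure, and—crucially—its value at $\lambda=0$. I would use $\lambda$ for the spectral variable throughout.

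For $\KS^n$ I take $h=p_{n-1}\in\PS^{n-1}$, so $\phi(\lambda)=\lambda p_{n-1}(\lambda)-1$ is a polynomial of degree at most $n$ with $\phi(0)=-1$; setting $p_n:=-\phi$ puts $p_n\in\PS_1^n$, which accounts for the normalization $p_n(0)=1$ in the stated form (the residual itself is $-p_n(\Op)\y$, so the characterization is understood up to this fixed overall sign). For $\RS^n$ and $\KRS^n$ I insert the rational forms $h(\lambda)=p_{n-1}(\lambda)/\Pi_{i=1}^{m}(\lambda+\alpha_i)$ with $m=n$, respectively $m=k=\lfloor n/2\rfloor$, and put $\phi$ over the common denominator $\Pi_{i=1}^{m}(\lambda+\alpha_i)$. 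Its numerator is then $\lambda p_{n-1}(\lambda)-\Pi_{i=1}^{m}(\lambda+\alpha_i)$, again of degree at most $n$, but now with nonzero constant term $-\Pi_{i=1}^{m}\alpha_i$. Absorbing the scalar $\Pi_{i=1}^{m}\alpha_i$ turns the denominator into the normalized product $\Pi_{i=1}^{m}(\frac{\lambda}{\alpha_i}+1)$ appearing in \eqref{resrsnum} and \eqref{reskrsnum}, and rescaling the numerator by the same scalar (with a sign) delivers a numerator $p_n\in\PS_1^n$. This gives one inclusion in each case.

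The direction that needs a genuine argument—and the step I expect to be the main obstacle—is the \emph{converse}, that \emph{every} $p_n\in\PS_1^n$ is realized by some admissible $x$. Here the normalization $p_n(0)=1$ is precisely what makes things work. In each case I must solve $\lambda\,p_{n-1}(\lambda)=\Pi_{i=1}^{m}(\lambda+\alpha_i)-\Pi_{i=1}^{m}\alpha_i\,p_n(\lambda)$ (for $\KS^n$ the right-hand side is simply $1-p_n(\lambda)$) for the unknown $p_{n-1}$. The right-hand side has been arranged to vanish at $\lambda=0$, since its constant term is $\Pi_{i=1}^{m}\alpha_i-\Pi_{i=1}^{m}\alpha_i\,p_n(0)=0$ (respectively $1-p_n(0)=0$); divisibility by $\lambda$ then yields a quotient $p_{n-1}$ of degree at most $n-1$, i.e.\ an admissible $p_{n-1}\in\PS^{n-1}$, and the previous proposition supplies the preimage $x=h(\Op)\y$. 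Everything remaining is degree bookkeeping—confirming that $\lambda p_{n-1}(\lambda)$ and the subtracted denominator polynomial each have degree at most $n$, so the numerator stays in $\PS^n$. Combining the two inclusions gives the three claimed equalities.
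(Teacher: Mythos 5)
Your proof is correct and takes essentially the same route as the paper's: represent the residual function as $\lambda h(\lambda)-1$ with $h$ from the preceding proposition, normalize the denominator to $\Pi_{i=1}^{m}\bigl(\tfrac{\lambda}{\alpha_i}+1\bigr)$ by dividing out $\Pi_{i=1}^{m}\alpha_i$, and for the converse exploit that $\Pi_{i=1}^{m}\bigl(\tfrac{\lambda}{\alpha_i}+1\bigr)-p_n(\lambda)$ vanishes at $\lambda=0$, so division by $\lambda$ produces an admissible $p_{n-1}\in\PS^{n-1}$. If anything, you are slightly more careful than the paper, which treats the $\KS^n$ case as obvious and silently drops the overall sign (its numerator $\tilde p_n$ satisfies $\tilde p_n(0)=-1$, so strictly $-\tilde p_n\in\PS_1^n$), a normalization you make explicit.
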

\begin{proof} The case for $\Q_{\KS^n}$ is obvious as is the case $\Q_{\RS^1}$. The residual space $\Q_{\RS^n}$
consists of functions $\tilde{r}_n(\Op)\y$ with $\tilde{r}_n = x r_{n-1}(x) -1$ and $r_{n-1}$ as in 
\eqref{rsnum}. By dividing with $\sigma:= \Pi_{i=1}^n \alpha_i$, $\tilde{r}_n$ can be written as in \eqref{resrsnum}
with numerator  
\begin{equation}\label{blabel} \tilde{p}_n(x) =\sigma^{-1} p_{n-1}(x) x - \Pi_{i=1}^{n} (\tfrac{x}{\alpha_i} + 1). \end{equation} Clearly this is an 
element in $\PS_1^n$. Conversely given any polynomial $\tilde{p}_n(x) \in \PS_1^n$ it follows that 
$\Pi_{i=1}^{n} (\frac{x}{\alpha_i} + 1) -p_n(x) $ has a root at $0$ and dividing by $x$ yields a polynomial 
$p_{n-1}(x)$  as in \eqref{blabel}. This verifies the representation.  The result for $\Q_{\RS^n}$ is obtained 
in a similar way by replacing $\Pi_{i=1}^{n} (\tfrac{x}{\alpha_i} + 1)$ by 
$\Pi_{i=1}^{k} (\tfrac{x}{\alpha_i} + 1)$. 
\end{proof}
A consequence is the following useful result:
Let $n = 2k$, i.e., a rational step. Then 
\be\label{resrep}
(\Op + \alpha_k)^{-1} \Q_{\KRS^{n-1}} \subset \KRS^n, \qquad 
(\Op + \alpha_k)^{-1} \KRS^{n-1}  \subset \KRS^n. \\
\ee
For a Krylov step $n = 2k+1$ we have 
\be\label{resrepkry} 
\Op \Q_{\KRS^{n-1}} \subset \KRS^n,  \qquad 
\Op  \KRS^{n-1}  \subset \KRS^n. \\
\ee

Note that the same representation holds for the least-squares residual $A x- y$ but with 
$\Op$ replaced by $\Ap$ and $\y$ replaced by $y$. This follows since 
\[ A f(\Op)\y - y = (\Ap f(\Ap) - 1) y. \] 
As a further consequence, we can estimate the residual for the rational various Krylov methods by 
that of the standard Krylov methods: 
\begin{theorem}
Let $ x_{\KS,n}$, $x_{\RS,n}$, and $x_{\KRS,n}$ be defined as in 
\eqref{eq:xks}, \eqref{eq:xrs}, and \eqref{eq:xkrs}, respectively. 
Then, 
\begin{align*} \|A  x_{\RS,n} -y\| &\leq
\|A  x_{\KRS,n} -y\| \leq 
\|A  x_{\KS,n} -y\|,  
\end{align*} 
and 
\begin{align*} \|A  x_{\KS,n} -y\| &\leq 
\Pi_{i=1}^k \left(\tfrac{\|\Ap\|}{\alpha_i} +1 \right) \|A  x_{\KRS,n} -y\|  \\
& \leq 
\Pi_{i=1}^n \left(\tfrac{\|\Ap\|}{\alpha_i} +1 \right)  \|A  x_{\RS,n} -y\|. 
\end{align*}
\end{theorem}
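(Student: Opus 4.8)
The plan is to reduce all three minimization problems to a minimization over the \emph{same} set of numerator polynomials $\PS_1^n$, using the residual representations of Proposition~\ref{prop2} together with the observation recorded after it that the least-squares residual $Ax-\ypure$ is obtained by replacing $\Op$ with $\Ap$ and $\y$ with $\ypure$. Writing $D_m(\lambda):=\Pi_{i=1}^m(\tfrac{\lambda}{\alpha_i}+1)$ (so $D_0\equiv 1$) and $k=\lfloor n/2\rfloor$, I would first record that
\[
\|A x_{\KS,n}-\ypure\|^2 = \min_{p\in\PS_1^n}\|p(\Ap)\ypure\|^2,\qquad
\|A x_{\KRS,n}-\ypure\|^2 = \min_{p\in\PS_1^n}\|D_k(\Ap)^{-1}p(\Ap)\ypure\|^2,
\]
and $\|A x_{\RS,n}-\ypure\|^2 = \min_{p\in\PS_1^n}\|D_n(\Ap)^{-1}p(\Ap)\ypure\|^2$. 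The decisive point, which is invisible at the level of the spaces $\KS^n,\RS^n,\KRS^n$ (these are pairwise incomparable and all of dimension $n$, so no containment is available), is that after this reformulation the three problems range over the identical feasible set $\PS_1^n$ and differ only through the fixed, mutually commuting factors $D_k(\Ap)^{-1}$ and $D_n(\Ap)^{-1}$.

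For the first chain I would exploit that $\Ap=AA^*$ is self-adjoint with $\Ap\succeq 0$, so by the functional calculus the operators $D_m(\Ap)$ have spectrum in $[1,\infty)$ and satisfy $I \succeq D_k(\Ap)^{-2}\succeq D_n(\Ap)^{-2}$, because $1\ge D_k(\lambda)^{-2}\ge D_n(\lambda)^{-2}$ for every $\lambda\ge 0$ (the denominator $D_n=D_k\,\Pi_{i=k+1}^n(\tfrac{\lambda}{\alpha_i}+1)$ only adds factors $\ge 1$). Hence for \emph{each fixed} $p\in\PS_1^n$, multiplying by the commuting positive operator $p(\Ap)^2$ preserves the order, giving
\[
\skp{D_n(\Ap)^{-2}p(\Ap)^2\ypure,\ypure}\le \skp{D_k(\Ap)^{-2}p(\Ap)^2\ypure,\ypure}\le \skp{p(\Ap)^2\ypure,\ypure}.
\]
Taking the minimum over the common set $\PS_1^n$ preserves these inequalities and yields $\|A x_{\RS,n}-\ypure\|\le\|A x_{\KRS,n}-\ypure\|\le\|A x_{\KS,n}-\ypure\|$.

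For the second chain I would argue by feasibility transfer. Let $p^\star\in\PS_1^n$ be optimal for the $\KRS$ problem; since $p^\star$ is also admissible for the $\KS$ problem, $\|A x_{\KS,n}-\ypure\|\le\|p^\star(\Ap)\ypure\| = \|D_k(\Ap)\,D_k(\Ap)^{-1}p^\star(\Ap)\ypure\|\le \|D_k(\Ap)\|\,\|A x_{\KRS,n}-\ypure\|$, where $\|D_k(\Ap)\| = D_k(\|\Ap\|)=\Pi_{i=1}^k(\tfrac{\|\Ap\|}{\alpha_i}+1)$ since $D_k$ is increasing and $\sigma(\Ap)\subseteq[0,\|\Ap\|]$. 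Symmetrically, I would take the $\RS$-optimal $q^\star\in\PS_1^n$ as a candidate for the $\KRS$ problem and use $D_k(\Ap)^{-1}=\Pi_{i=k+1}^n(\tfrac{\Ap}{\alpha_i}+1)\,D_n(\Ap)^{-1}$ to obtain $\|A x_{\KRS,n}-\ypure\|\le \Pi_{i=k+1}^n(\tfrac{\|\Ap\|}{\alpha_i}+1)\,\|A x_{\RS,n}-\ypure\|$; multiplying by $\Pi_{i=1}^k(\tfrac{\|\Ap\|}{\alpha_i}+1)$ telescopes the two products into $\Pi_{i=1}^n(\tfrac{\|\Ap\|}{\alpha_i}+1)$ and gives the claimed bound.

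The step I expect to be the main obstacle is the very first one: making rigorous that the three residual norms are genuinely minimizations over one and the same class $\PS_1^n$, with only the fixed denominators changing. This is exactly where Proposition~\ref{prop2} is needed, and one must check that the numerator in each representation ranges over \emph{all} of $\PS_1^n$ and not a proper subset. Once this common-domain reformulation is in place, both chains reduce to the elementary monotonicity of $\skp{f(\Ap)\ypure,\ypure}$ under the operator order and to crude operator-norm bounds on $D_m(\Ap)$.
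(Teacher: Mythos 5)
Your proposal is correct and takes essentially the same route as the paper: both use Proposition~\ref{prop2} (plus the remark that $Ax-y$ is represented with $\Ap$ and $\ypure$ in place of $\Op$ and $\y$) to rewrite all three residual minimizations as infima over the \emph{same} class $\PS_1^n$ with fixed commuting denominator factors, which are then bounded by $\|(\tfrac{\Ap}{\alpha_i}+I)^{-1}\|\le 1$ in one direction and $\|\tfrac{\Ap}{\alpha_i}+I\|\le \tfrac{\|\Ap\|}{\alpha_i}+1$ in the other. Your spectral-order formulation of the first chain and the explicit feasibility transfer for the second are just more detailed renderings of the steps the paper compresses into ``obtained in a similar way and so are the opposite directions of the estimates.''
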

\begin{proof}
We have  with the notation in Proposition~\ref{prop2},  and by the definition of  $ x_{\KS,n}$, 
\begin{align*} \|A  x_{\RS,n} -y\|  &= \inf_{p_n \in \PS_1^n} \| \Pi_{i=1}^{n} (\tfrac{\Ap}{\alpha_i} + I)^{-1}  p_n(\Ap) \ypure \| \\
 &\leq  \Pi_{i=1}^{n} \|(\tfrac{\Ap}{\alpha_i} + I)^{-1} \| \inf_{p_n \in \PS_1^n}  \|p_n(\Ap)\ypure \| \\
 &\leq \inf_{p_n \in \PS_1^n}  \|p_n(\Ap) \ypure \| = \|A  x_{\KS,n} -y\|. 
\end{align*} The result for $x_{\KRS,n}$ is obtained in a similar way and so are the opposite directions of the estimates. 
\end{proof}

As an immediate consequence, it follows that $n$-dimensional linear systems,
$A^*A \in \R^{n\times n}$, 
can be solved 
by rational Krylov method in at most $n$ steps, assuming exact arithmetic. 
In fact, since  $\|A  x_{\KS,n} -y\|$ vanishes after at most $n$ steps, we obtain:
\begin{corollary}
Let $A \in \R^{n\times n}$, $y \in \R^n$ be such that $A x = y$ has a unique solution $x \in \R^n$. 
Then in at most $n$ steps we have $x_{\RS,n} = x$ and $x_{\KRS,n} = x$.  
\end{corollary}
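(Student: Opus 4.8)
The plan is to deduce the corollary directly from the preceding Theorem, which already establishes that the rational residuals are controlled by the standard CGNE residual. The key observation is that in finite dimensions the CGNE method terminates: since $\KS^n$ for $A \in \R^{n\times n}$ eventually spans the whole space (or at least the range of $\Op = A^*A$ relevant for the solution), the least-squares minimizer $x_{\KS,n}$ coincides with the true solution $x$ after at most $n$ steps, so $\|A x_{\KS,n} - y\| = 0$.

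First I would invoke the first chain of inequalities in the Theorem,
\[
 \|A x_{\RS,n} - y\| \leq \|A x_{\KRS,n} - y\| \leq \|A x_{\KS,n} - y\|.
\]
Once we know the right-hand side vanishes at step $n$, both middle and left quantities are squeezed to zero, giving $\|A x_{\RS,n} - y\| = 0$ and $\|A x_{\KRS,n} - y\| = 0$. Because $A x = y$ has a unique solution $x$, vanishing residual in the least-squares sense forces $x_{\RS,n} = x$ and $x_{\KRS,n} = x$, which is exactly the claim. The second chain of inequalities in the Theorem (with the products $\Pi_i(\|\Ap\|/\alpha_i + 1)$) is not needed here, since these constants are finite and would only be used to bound in the reverse direction.

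The one point that deserves care — and the main (minor) obstacle — is justifying that $\|A x_{\KS,n} - y\|$ actually vanishes after at most $n$ steps. This is the standard finite-termination property of CGNE: the iterates minimize over the nested Krylov spaces $\KS^1 \subset \KS^2 \subset \cdots$, and in $\R^n$ this chain stabilizes after at most $n$ steps, at which point $\KS^n$ contains the CGNE solution of the normal equations $A^*A x = A^* y$, equivalently the unique solution of $A x = y$. I would state this termination as a well-known fact (it is precisely the remark preceding the Corollary that $\|A x_{\KS,n} - y\|$ vanishes after at most $n$ steps) rather than reprove it. With that in hand the corollary is immediate from the Theorem, so the proof is essentially a two-line squeeze argument.
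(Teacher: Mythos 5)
Your proposal is correct and matches the paper's own argument: the paper likewise deduces the corollary from the first chain of inequalities in the preceding theorem together with the finite-termination fact that $\|A x_{\KS,n} - y\|$ vanishes after at most $n$ steps (stated there without proof, just as you do), followed by the uniqueness of the solution. Nothing is missing, and your remark that the second chain of inequalities is not needed is accurate.
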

\begin{remark}
In a similar way we can prove that the least-squares residual for the rational methods $x_{\RS,n}$ 
$x_{\KRS,n}$ are always smaller than that any of the appearing Tikhonov regularization $x_{\alpha_i}$ and 
the corresponding iterative Tikhonov regularization (cf.~\cite{HaGr}). 
\end{remark}

\section{An Arnoldi relation}\label{sec:arnoldi}
As mentioned above, the pure rational Krylov spaces $\RS^n$ do not allow for a simple iterative computation of 
the minimizers $x_{\RS,n}$. This is in contrast to the mixed space $\KRS$, as we will show below. 
The central observation to obtain this result is a theorem of Prani\'c and Reichel \cite[Theorem 1, 2]{prarei},  according 
to which the orthonormalized residuals in $\KRS$ allow for a short recurrence relation. In fact, the results 
\cite{prarei} deal with more general rational functions and also general ``mixing'' method; the main point 
is that the length of the recurrence is dominated by the length between polynomial powers $\Op^k$ occurring. 
Since in our definition we have an interlacing of such powers,  we can find simple recursive methods for 
computing minimizers in $\KRS$. Let us note that the Prani\'c-Reichel theorem has been employed in 
\cite{prareirod} for finding a Hessenberg-type representation with few off-diagonals via the Arnoldi process 
of a (not necessarily symmetric) matrix. In the first part we reproduce these results for the (simpler) symmetric case,  
and we obtain a similar Arnoldi relation as in \cite{prareirod}, which in our case is  pentadiagonal by symmetry. 
In the next section we illustrate the associated Lanczos method. 
As for the Krylov methods, we have to take care of the (rare) occasion of a breakdown: 
\begin{definition}\label{def:breakdown}
Let $\X$ be  any of the Krylov spaces $\X \in \{\KS^n, \RS^n,\KRS^n\}$.  We say that the respective space $\X$ 
does not break 
down at step $n$ if 
\[ \dim \X = n. \]
\end{definition}
The criterion for breakdown is well-known in the Krylov space case and can be extend to the rational cases: 
\begin{proposition}\label{propbreak} 
Let $n_{bd}$ be the smallest iteration number where one of  the Krylov spaces $\X \in \{\KS^n, \RS^n,\KRS^n\}$
breaks down, i.e., $n_{bd}-1 = \dim \X < n_{bd}$.
Then $n = n_{bd}$ if and only if $\y$ can be written as a linear combination of $n-1$ eigenvectors of $\Op$. 
\end{proposition}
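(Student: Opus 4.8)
The plan is to reduce the breakdown criterion for all three spaces to a single spectral count. Since $\Op = A^*A$ is self-adjoint, positive semidefinite and compact, I would start from its spectral decomposition and group eigenvectors by distinct eigenvalues: write $\y = \sum_{j=1}^m w_j$, where each $w_j \neq 0$ is the orthogonal projection of $\y$ onto the eigenspace belonging to a distinct eigenvalue $\mu_j \geq 0$, and $m$ is the number of nonzero such projections. Each $w_j$ is itself an eigenvector, the $w_j$ lie in mutually orthogonal eigenspaces and are therefore linearly independent, and $m$ is exactly the least number of eigenvectors needed to represent $\y$. Thus the assertion to be proved is equivalent to $n_{bd} = m+1$.

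For every function $g$ one has $g(\Op)\y = \sum_{j=1}^m g(\mu_j)\, w_j$. Using the rational representations \eqref{rsnum} and \eqref{zzz}, each space can be written as $\X^n = \{\, g(\Op)\y : g \in \mathcal{F}_n\,\}$, where $\mathcal{F}_n$ is an $n$-dimensional space of functions (polynomials of degree below $n$ for $\KS^n$; rational functions with a numerator in $\PS^{n-1}$ over the fixed denominator $\Pi_i(x+\alpha_i)$ for $\RS^n$ and $\KRS^n$). Since the $w_j$ are linearly independent, $\dim \X^n$ equals the rank of the evaluation map $g \mapsto (g(\mu_1),\ldots,g(\mu_m))$ on $\mathcal{F}_n$. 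A point worth recording is that these denominators never vanish on the spectrum: $\alpha_i > 0$ and $\mu_j \geq 0$ give $\Pi_i(\mu_j + \alpha_i) > 0$.

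The crux --- and the step I expect to demand the most care --- is showing that this rank equals $\min(n,m)$ in all three cases. For $n \leq m$ I would prove injectivity: if $g \in \mathcal{F}_n$ vanishes at every $\mu_j$, then clearing the nonvanishing denominator shows its numerator is a polynomial of degree below $n \leq m$ with $m$ distinct roots, hence identically zero; so the map is injective and the rank is $\dim \mathcal{F}_n = n$. For $n > m$ I would prove surjectivity onto the $m$ target values by interpolation: prescribing $g(\mu_j)=\beta_j$ reduces to interpolating $m$ values by the numerator polynomial, which has degree below $n$ with $m \leq n-1$, and is therefore always solvable; so the rank is $m$. This is precisely the rational analogue of the classical Vandermonde argument governing the standard Krylov dimension --- here the rational cases carry a Cauchy-type structure, but phrasing the rank as injectivity/interpolation avoids any explicit determinant.

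It then remains only to assemble the pieces. Since $\dim \X^n = \min(n,m)$ for each $\X \in \{\KS^n,\RS^n,\KRS^n\}$, breakdown at step $n$ --- that is, $\dim \X^n < n$ --- happens exactly when $m < n$ and never when $n \leq m$; in particular all three spaces break down simultaneously, at the common smallest value $n_{bd} = m+1$. Hence $n = n_{bd}$ holds if and only if $m = n-1$, i.e.\ if and only if $\y$ is a linear combination of $n-1$ eigenvectors of $\Op$, as claimed. (This presumes $m$ finite; if $\y$ has components along infinitely many eigenspaces, no breakdown occurs and $n_{bd}$ is undefined.)
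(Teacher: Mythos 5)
Your proof is correct, but it follows a genuinely different route from the paper's. The paper also clears the (injective) denominator factors $\Pi(\Op+\alpha_k I)^{-1}$ to reduce breakdown in $\RS^n$ and $\KRS^n$ to the polynomial condition $p_{n-1}(\Op)\y=0$, but from there it simply cites the classical Krylov-space result (Saad, Prop.~6.1--6.2): $p_{n-1}(\Op)\y=0$ means $\KS^n$ is an $(n-1)$-dimensional invariant subspace, whence $\y$ is a combination of $n-1$ eigenvectors; the converse is handled by exhibiting the polynomial $\Pi(\lambda-\lambda_i)$ annihilating $\y$. You instead work self-contained from the spectral decomposition: grouping $\y=\sum_{j=1}^m w_j$ by distinct eigenvalues, identifying each space via \eqref{rsnum} and \eqref{zzz} as the image of an $n$-dimensional function space under $g\mapsto g(\Op)\y$, and computing the rank of the evaluation map at $(\mu_1,\dots,\mu_m)$ by a Vandermonde-type injectivity argument ($n\le m$) and Lagrange interpolation of the numerator ($n>m$), using that the denominators are strictly positive on the spectrum. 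What your approach buys is a stronger and more informative statement, $\dim\X^n=\min(n,m)$ for all three spaces, which in particular shows that $\KS^n$, $\RS^n$, and $\KRS^n$ all break down simultaneously at $n_{bd}=m+1$ --- a fact only implicit in the paper --- and it cleanly disposes of the minimality issue in the phrase ``linear combination of $n-1$ eigenvectors'' by defining $m$ as the least such number; it also makes explicit the caveat (which the paper leaves silent) that in infinite dimensions no breakdown occurs when $\y$ has infinitely many spectral components. What the paper's route buys is brevity: by outsourcing the spectral bookkeeping to the standard invariant-subspace characterization, its proof is a few lines. Both arguments hinge on the same two ingredients --- the rational representation with fixed nonvanishing denominator, and injectivity of $\Op+\alpha_k I$ --- so your proposal is best viewed as an expanded, citation-free version of the paper's reduction with a sharper conclusion.
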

\begin{proof}
If a breakdown occurs, then there must be a linear dependent combination of elements in $\X$ and hence there exist a 
polynomial $p_{n-1}$ of degree $\PS^n$ such that  
\[  \Pi_{i=0}^m (\Op + \alpha_k I)^{-1}  p_{n-1}(\Op) \y  = 0, \]
where $m = 0$ for $\KS$, $m = \lfloor \frac{n}{2} \rfloor$ for $\KRS$ and $m = n$ for $\RS$. Since 
the operator $\Pi_{i=0}^m (\Op + \alpha_k I)$ does not have a nullspace, in any case we end up with the condition 
$ p_{n-1}(\Op) \y = 0$. By \cite[Prop.~6-1, 6.2]{saadbook} this means that $\KS^n$ is an $n-1$-dimensional invariant 
subspace, thus any element can be written as a linear combination of $n-1$ eigenvectors, in particular $\y$ can be. 
Conversely if $\y$ can be expressed in this way, we may easily find a polynomial, e.g., $p_{n-1}(\lambda) = \Pi_{i=1}^n (\lambda-\lambda_i)$, 
where $\lambda_i$ are the eigenvalues of the corresponding eigenvectors. 
\end{proof}

\subsection{Arnoldi method and Arnoldi relation}
We now construct an orthonormal basis for $\KRS^n$ by orthonormalizing the basis elements in 
the definition by a (modified) Gram-Schmidt method. For the usual case $\KS^n$ this is exactly the 
Arnoldi method. We will show that in the symmetric case this yields a pentadiagonal representation of 
the operator $\Op$. We discuss Algorithm~\ref{alg:Arnolid}. 

\begin{algorithm}
\caption{Arnoldi method for $\KRS$.}\label{alg:Arnolid}
\begin{algorithmic}[1] 
 \STATE $q_1:= \frac{\y}{\|y\|}$ 
 \FOR{$i = 1\ldots N$}
     \STATE \[ v_i = \begin{cases} (\Op + \alpha_k I)^{-1} q_{i-1} & n = 2k \\ 
                   \Op q_{i-1} & n = 2k +1 \\ 
                 \end{cases} \] 
    \STATE    $ \tilde{q}_i := v_i - \sum_{j=1}^{j-1}  \skp{v_i,q_j} q_j $  \hfill  \COMMENT{Gram-Schmidt step} 
    \STATE    $ q_i = \frac{\tilde{q}_i}{\|\tilde{q}_i\|}$   \hfill \COMMENT{normalization step} 
  \ENDFOR
\end{algorithmic}
\end{algorithm}
Let us stress that in this method, the next element in the Krylov space $\KRS$ is given by either 
$(\Op + \alpha_k I)^{-1} q_{i-1}$ or $\Op q_{i-1}$ and not as suggested by the definition by 
$(\Op + \alpha_k I)^{-1} \y$ or $\Op^k \y$. 
This is motivated by the analogous construction for the 
usual Krylov space $\KS$ and an analogous suggestion in \cite[(4.13)]{Ruhe}. We have written the algorithm with an usual Gram-Schmidt orthgonalization but 
the modified Gram-Schmidt method is suited as well (and recommended) 
since  they are mathematically equivalent but the latter is numerically more stable.

Furthermore note that the algorithm is {\em well-defined} as long as $\|\tilde{q}_i\| \not = 0$ such that the normalization step can be performed.
We have the following proposition:
\begin{proposition}\label{prop4}
The iterations in Algorithm~\ref{alg:Arnolid} are well defined as long as the Krylov space $\KRS$ does not break down 
up to  dimension $n \leq N$. Moreover, in this case, the vectors $(q_i)_{i=1}^n$ build an orthonormal basis for $\KRS^n$. 
\end{proposition}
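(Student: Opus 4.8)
The plan is to prove both claims of Proposition~\ref{prop4} simultaneously by induction on the iteration index $i$, showing at each step that (a) the vector $\tilde q_i$ produced before normalization is nonzero precisely when $\KRS^i$ has not yet broken down, and (b) the accumulated vectors $\{q_1,\dots,q_i\}$ form an orthonormal basis of $\KRS^i$. The base case is immediate: $q_1 = \y/\|y\|$ spans $\KRS^1 = \spa\{\y\}$, which is one-dimensional as long as $\y \neq 0$.

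For the inductive step, assume $\{q_1,\dots,q_{i-1}\}$ is an orthonormal basis of $\KRS^{i-1}$. The central point is to verify that the raw vector $v_i$ computed in line~3 actually lands in $\KRS^i$, so that after Gram--Schmidt orthogonalization against the previous basis we still have a vector in $\KRS^i$. Here I would invoke the consequence of Proposition~\ref{prop2} recorded in \eqref{resrep} and \eqref{resrepkry}: in a rational step $i = 2k$ we have $v_i = (\Op + \alpha_k I)^{-1} q_{i-1}$ with $q_{i-1} \in \KRS^{i-1}$, and the inclusion $(\Op + \alpha_k I)^{-1}\KRS^{i-1} \subset \KRS^i$ gives $v_i \in \KRS^i$; in a Krylov step $i = 2k+1$ we have $v_i = \Op q_{i-1}$ and the inclusion $\Op\,\KRS^{i-1} \subset \KRS^i$ serves the same role. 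Since $q_1,\dots,q_{i-1}$ already span $\KRS^{i-1}$ and $v_i \in \KRS^i$, the orthonormalized vector $q_i$ together with the previous ones spans $\spa(\KRS^{i-1} \cup \{v_i\})$, which is contained in $\KRS^i$; because $\KRS^i$ extends $\KRS^{i-1}$ by exactly one dimension (by construction), equality $\spa\{q_1,\dots,q_i\} = \KRS^i$ follows once $q_i$ is genuinely a new direction.

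That last qualification is exactly the well-definedness claim, and this is the step I expect to carry the real content. I would argue the equivalence: $\tilde q_i = 0$ if and only if $v_i$ already lies in $\KRS^{i-1} = \spa\{q_1,\dots,q_{i-1}\}$, if and only if the new generator fails to enlarge the space, if and only if $\dim \KRS^i = i-1 < i$, i.e. a breakdown at step $i$ in the sense of Definition~\ref{def:breakdown}. The nontrivial direction is to confirm that $v_i \in \KRS^{i-1}$ is genuinely equivalent to $\dim\KRS^i < i$ and not merely sufficient; one must check that the generator used in line~3, namely $(\Op+\alpha_k I)^{-1}q_{i-1}$ or $\Op q_{i-1}$, generates the same enlargement of $\KRS^{i-1}$ as the defining generator $(\Op+\alpha_k I)^{-1}\y$ or $\Op^k\y$ does. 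This is precisely the subtlety flagged in the remark following Algorithm~\ref{alg:Arnolid}, and I would resolve it using the rational representation of Proposition~(the unnumbered first one) for $\KRS^i$: both choices, applied to a spanning set of $\KRS^{i-1}$, produce a rational function of $\Op$ applied to $\y$ whose numerator has degree exactly $i-1$ with the correct denominator, so the span is enlarged by one dimension exactly when that degree-$(i-1)$ contribution is not annihilated, which is the non-breakdown condition characterized in Proposition~\ref{propbreak}.

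The remaining orthonormality is then routine: by construction of the Gram--Schmidt step $q_i \perp q_j$ for $j<i$, and the normalization in line~5 makes $\|q_i\|=1$, legitimate precisely because $\tilde q_i \neq 0$ under the no-breakdown hypothesis. Thus well-definedness up to dimension $n \le N$ and the orthonormal basis property hold together, completing the induction.
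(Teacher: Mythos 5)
Your proposal follows essentially the same route as the paper's proof: induction on $i$, the inclusions \eqref{resrep} and \eqref{resrepkry} to place $v_i$ in $\KRS^i$, the identification of $\tilde q_i = 0$ with $v_i \in \KRS^{i-1}$, the reduction of that event to a rational function of the form \eqref{zzz} annihilating $\y$, and then, after clearing the (injective) denominator, a nonzero polynomial annihilating $\y$, contradicting non-breakdown via Proposition~\ref{propbreak}; your dimension count for $\spa\{q_1,\dots,q_i\}\subseteq \KRS^i$ is the paper's implicit span argument.

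Where you go beyond the paper is in explicitly flagging that the algorithm's generators $(\Op+\alpha_k I)^{-1}q_{i-1}$ and $\Op q_{i-1}$ differ from the definitional generators $(\Op+\alpha_k I)^{-1}\y$ and $\Op^k\y$, so one must rule out that the annihilating rational function is \emph{identically zero} (in which case no nonzero polynomial is produced and no contradiction arises). The paper's proof silently assumes this nontriviality; you correctly identify it as the real content, but your sketched resolution misstates the mechanism in the rational steps. For $i=2k$ one has $v_i=(\Op+\alpha_k I)^{-1}q_{i-1}$, whose representation over the denominator $\Pi_{j\le k}(x+\alpha_j)$ has the \emph{same} numerator $u$ as $q_{i-1}$, of degree at most $i-2$ --- not ``exactly $i-1$.'' The enlargement of the space is governed not by a top-degree coefficient but by the new pole: modulo non-breakdown, $v_i$ coincides with an element of $\KRS^{i-1}$ as a rational function precisely when $u(-\alpha_k)=0$. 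Your degree mechanism is correct only for the Krylov steps $i=2k+1$ (where the numerator of $\Op q_{i-1}$ is $x\,u(x)$), and even there the asserted exactness of $\deg u = i-2$ needs an argument. Both gaps can be closed by the same observation: by the induction hypothesis $q_{i-1}\perp\KRS^{i-2}$, so $u$ is (proportional to) an orthogonal polynomial with respect to a positive measure supported in the spectrum of $\Op=A^*A\subseteq[0,\infty)$; hence $u$ has exact degree and all its roots lie in $[0,\infty)$, so $u(-\alpha_k)\neq 0$ because $\alpha_k>0$. Since the paper itself asserts the corresponding nontriviality without justification, your write-up is at least as rigorous as the original; just repair the rational-step degree claim along these lines.
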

\begin{proof}
By \eqref{resrep}, \eqref{resrepkry}, the vectors $v_i$ are in $\KRS^{i}$ if the $(q_j)_{j=1}^{i-1} \in \KRS^{i-1}$. 
If $v_i \not \in \KRS^{i-1}$, then $\tilde{q}_i$ will be nonzero and the normalization step can be performed, and 
the iteration is well defined at $i=n$. If, however, $v_i \in \KRS^{i-1}$, that is, $\tilde{q}_i = 0$, 
we may find a rational function as \eqref{zzz} such that $s_{i-1}(\Op) \y = 0$, and hence as in the proof of
Proposition~\ref{propbreak} a polynomial $p_{i-1}$ with  $p_{i-1}(\Op) \y =0$, which contradicts 
the assumption of non-breakdown. Hence, by induction, the $(q_j)_{j=1}^i$ span $\KRS^{i}$ and they 
are orthonormal by construction. 
\end{proof}


\begin{theorem}\label{th2}
Assume that the Arnoldi method in Algorithm~\ref{alg:Arnolid} 
does not break down up to index $N$. 
Then the matrix 
\[ T_{i,j}:= \skp{q_i, \Op q_j}_{i,j = 1,N} \]
is pentadiagonal and satisfies 
\begin{align*}  
T_{m,2k} = 0 \quad \text{ for }m = 2k+2,\ldots, N, \\
T_{m,2k+1} = 0 \quad \text{ for } m = 2k+3,\ldots, N. 
\end{align*} 
\end{theorem}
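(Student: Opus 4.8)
The matrix $T$ is symmetric by symmetry of $\Op$, so it suffices to establish the two vanishing patterns for $m > j$ (the entries below the diagonal), and symmetry transfers them above the diagonal. The strategy is to read off, for each column index $j$, the smallest Krylov space $\KRS^p$ that is guaranteed to contain $\Op q_j$, and then use orthogonality of the basis: since $q_m \perp \KRS^{p}$ whenever $m > p$, every inner product $\skp{q_m, \Op q_j}$ vanishes as soon as $m$ exceeds that index $p$. The whole argument therefore reduces to locating $\Op q_j$ in the Krylov filtration as sharply as possible, separately for even and odd $j$.

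First I would treat the even columns. Fix $j = 2k$, so $q_{2k} \in \KRS^{2k}$; by the rational representation~\eqref{zzz}, $q_{2k} = s_{2k-1}(\Op)\y$ where $s_{2k-1}$ has denominator $\Pi_{i=1}^{k}(x+\alpha_i)$ and polynomial numerator of degree at most $2k-1$. Multiplying by $\Op$ raises the numerator degree by one to at most $2k$, keeping the same denominator of $k$ factors; by the representation~\eqref{zzz} applied at level $2k+2$ (for which $k' = k+1$ and the numerator may have degree up to $2k+1$), this places $\Op q_{2k}$ in $\KRS^{2k+2}$. Hence $\skp{q_m, \Op q_{2k}} = 0$ for all $m > 2k+2$, i.e.\ for $m = 2k+3, \ldots, N$. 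Since $T$ is symmetric, the even row $T_{2k,\cdot}$ vanishes past column $2k+2$ as well, which together with $T_{2k,m} = 0$ for $m \geq 2k+3$ gives $T_{m,2k} = 0$ for $m = 2k+2, \ldots, N$ once we also check the single entry $T_{2k+2,2k}$; I address that in the next paragraph.

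For the odd columns, fix $j = 2k+1$, so $q_{2k+1} \in \KRS^{2k+1}$ has the form $s_{2k}(\Op)\y$ with numerator degree at most $2k$ over the denominator $\Pi_{i=1}^{k}(x+\alpha_i)$. Here is the key asymmetry: multiplying by $\Op$ again raises the numerator degree to at most $2k+1$ over $k$ factors, which by~\eqref{zzz} at level $2k+2$ (where $k'=k+1$ permits numerator degree $2k+1$ with an \emph{extra} denominator factor) already lands $\Op q_{2k+1}$ in $\KRS^{2k+2}$. The sharper placement, needed for the stated bound $T_{m,2k+1}=0$ from $m=2k+3$ onward, comes from observing that the numerator of $\Op q_{2k+1}$ has degree at most $2k+1$ but still carries only the $k$ denominator factors, so it is actually an element of $\KRS^{2k+2}$ rather than requiring $\KRS^{2k+3}$; the alternation of Krylov and rational steps is exactly what prevents the degree from spilling into the next Krylov level. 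Thus $\skp{q_m,\Op q_{2k+1}} = 0$ for all $m > 2k+2$, i.e.\ for $m = 2k+3,\ldots,N$, and symmetry gives the matching row statement.

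Finally, combining the two patterns yields pentadiagonality: the even-column statement $T_{m,2k}=0$ for $m\geq 2k+2$ kills entries two or more steps below each even diagonal position, while the odd-column statement $T_{m,2k+1}=0$ for $m\geq 2k+3$ kills entries three or more steps below each odd diagonal position; symmetry reflects both above the diagonal. The nonzero band therefore extends at most to $T_{2k+1,2k}$ (one below an even diagonal) and $T_{2k+2,2k+1}$ (one below an odd diagonal), giving at most two nonzero sub- and super-diagonals, hence a pentadiagonal matrix. The main obstacle I anticipate is the sharp degree bookkeeping in the odd-column case: one must verify that $\Op q_{2k+1}$ genuinely stays within $\KRS^{2k+2}$ and does not require the Krylov element $\Op^{k+1}\y$ that only appears at level $2k+3$, which is precisely where the interlacing structure of the mixed space~\eqref{zzz} must be used carefully rather than merely bounding degrees by one.
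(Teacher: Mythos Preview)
Your degree-counting strategy via the rational representation~\eqref{zzz} is a valid and arguably cleaner route than the paper's argument, which works directly with the Arnoldi recursion (taking inner products of the Gram--Schmidt identity and of~\eqref{aaa} with $q_m$, together with an induction). However, the bookkeeping in your execution is off in both cases.

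\emph{Even columns.} You place $\Op q_{2k}$ only in $\KRS^{2k+2}$, but the sharper inclusion $\Op q_{2k}\in\KRS^{2k+1}$ holds: by~\eqref{zzz}, $q_{2k}$ has a numerator of degree $\le 2k-1$ over the denominator $\prod_{i=1}^{k}(x+\alpha_i)$, so multiplying by $x$ gives numerator degree $\le 2k$ over the \emph{same} $k$ factors, which is exactly the description of $\KRS^{2k+1}$. Orthogonality then yields $T_{m,2k}=0$ for all $m\ge 2k+2$ directly, and the promised (but never delivered) patch for the single entry $T_{2k+2,2k}$ becomes unnecessary.

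\emph{Odd columns.} Here there is a genuine error: the claim $\Op q_{2k+1}\in\KRS^{2k+2}$ is false. The rational function representing $\Op q_{2k+1}$ has numerator degree $\le 2k+1$ over only $k$ denominator factors, whereas membership in $\KRS^{2k+2}$ requires the denominator $\prod_{i=1}^{k+1}(x+\alpha_i)$; inserting the missing factor $(x+\alpha_{k+1})$ pushes the numerator degree to $2k+2$, exceeding the bound $2k+1$ allowed at level $2k+2$. The sharpest correct inclusion is $\Op q_{2k+1}\in\KRS^{2k+3}$, which gives $T_{m,2k+1}=0$ only for $m\ge 2k+4$. Your sentence ``the interlacing structure must be used carefully'' is precisely where the argument breaks: at level $2k+2$ the mixed space adds a denominator factor, not a Krylov power, so it cannot absorb the extra numerator degree produced by multiplication with $\Op$. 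In fact the stated bound ``$m=2k+3$'' in the theorem is itself a misprint: comparing with the displayed structure~\eqref{stru} one sees that the entries $T_{2k+3,2k+1}$ (the $\gamma$'s on the second sub\-diagonal) are generically nonzero, so the intended and provable bound is $m\ge 2k+4$. With that correction your method goes through and delivers the pentadiagonal structure (one nonzero subdiagonal below even columns, two below odd columns), matching~\eqref{stru}.
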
 
\begin{proof}
Let $n$ be a Krylov step, i.e., $n = 2k +1$. By definition, we have 
\[ \|\tilde{q}_n\| q_n = \Op q_{n-1} - \sum_{j=1}^{n-1}  \skp{\Op q_{n-1},q_j} q_j. \]
Taking the inner product with $q_m$, first with $m >n$ and then with $m =n$  yields, by the orthogonality 
on the left-hand side, $\delta_{n,m}  \|\tilde{q}_n\|$. 
Thus, 
\[ \skp{q_{m},\Op q_{n-1}} = 0, \quad m \geq n+1, \quad \text{ and } \quad \skp{q_{n},\Op q_{n-1}} = \|\tilde{q}_n\|. \]
In case of a rational step, i.e., $n = 2k$, we multiply the equation for $v_i$ by 
$\Op + \alpha_k I$ to get 
\begin{equation}\label{aaa} \|\tilde{q}_n\| (\Op q_n + \alpha_k  q_n )   = 
q_{n-1} - \sum_{j=1}^{n-1} \skp{ (\Op + \alpha_k I)^{-1} q_{n-1} ,q_j} (\Op + \alpha_k I) q_j .
\end{equation} 
Now  take  the inner product with $q_m$ for $m >n+1$. We prove by induction that 
\[ \skp{q_{m},\Op q_{n}} = 0, \quad \text{ for } m \geq n+2, n = 2k. \]
Assume that it holds up to all 
previous rational steps for  $n = 2j $, $j = 1, k-1$. 
Let $m = n +2$. By \eqref{aaa} it follows from the orthogonality and the fact that 
$\skp{q_m, \Op q_j}$ on the right-hand side vanishes by the induction assumption for 
all rational steps and for the Krylov steps as well by the proof in the first step. 
Thus $T$ has at most two nonzero lower subdiagonals. Since $T$ is symmetric, the 
matrix is pentadiagonal with the stated structure. 
\end{proof} 

As an illustration, the matrix $T$ has the following form. Denote by $Q_n :\R^{n} \to \X$ the 
following operator that takes linear combinations of the vectors $q_i$ given by the Arnoldi method:
\be\label{defqn} Q_n = [q_1 \, \cdots \, q_n] .  \ee
Then  the resulting matrix $T$ has the following 
structure: 
\be\label{stru} T= Q_n^T \Op Q_n = \tmat{\grdiag}{\beta}{\gamma}. \ee
That is, a pentadiagonal matrix with zeros in the second diagonal at even indices. 
One may as well view this as a block-tridiagonal matrix
consisting of $2\times2$ blocks with  with rank-1 off-diagonal blocks. 

We note that a corresponding representation has been established in \cite{prareirod} 
for the non-symmetric case, where the matrix $T$ has generalized Hessenberg form with 
several subdiagonals; see, e.g., equation~(18) ibid. Clearly, in case of   symmetric operators, 
such Hessenberg structure becomes our multi-diagonal form.


\section{A Lanczos method}\label{sec:lanczos}
Based on the Arnoldi relations, we can now solve the least-squares problem 
by exploiting the structure of $T$. This is analogous to the usual Krylov space methods, 
where a tridiagonal structure appears, which can be inverted iteratively yielding the 
Lanczos method.  Similar to, e.g.,  \cite[Chpt.~6.7]{saadbook}, we extend this idea to our pentadiagonal case. 

For simplicity of notation we assume now a finite-dimensional case with $\Op$ given by a
symmetric $N \times N$ matrix and $\y$ a given vector in $\R^N$. 
Assume that the Arnoldi method does not break down up to index $N$. 

We define the matrix (respectively operators) 
\[ T_n = Q_n^T \Op Q_n  = (T_{i,j})_{i,j = 1,m},  \]
with $Q_n$ from \eqref{defqn},  
and we set $Q = Q_N$. 
It follows that 
\[ Q^T \y = \beta e_1, \]
where $e_1$ is the first unit vector $e_1 = (1,0,\ldots, 0)^T$ and $\beta \in \R$.
By orthogonality, the normal equation $\Op x = \y$ translates to 
$T c =  \beta e_1$ where $ x = Q c$. Finding the least-squares minimizer $x_n$  in the Krylov space $\KRS^n$ 
translates to $Q_n^T \Op x_n = Q_n^T \y$ or 
\begin{equation}\label{defcn}   T_n c_n =  \beta e_1  \qquad \text{ with } \qquad  x_n = Q_n c_n.
\end{equation} Thus, by the 
sparse structure of $T$, we can iteratively solve for $x_n$, which is the objective of the next 
section. The result  is essentially a generalization of the D-Lanczos method in \cite[Section 6.7.1]{saadbook}, 
and we follow a similar analysis as there.

\subsection{Solving the pentadiagonal system}
Recall that we have shown the following structure:
In case of a rational step $n = 2k$:
\begin{equation}\label{strrat} T_n = 
\begin{bmatrix} T_{n-1} &  \begin{matrix} 0 \\ \beta_n \end{matrix} 
   \\
\begin{matrix} 0 & \beta_n    \end{matrix} &
   \grdiag_{n} 
\end{bmatrix}.
\end{equation}
In case of a Krylov step $n = 2k +1$ we have 
\begin{equation}\label{strkry} T_n = 
\begin{bmatrix} T_{n-1} &  \begin{matrix} 0 \\ \gamma_n \\ \beta_n \end{matrix} 
   \\
\begin{matrix} 0 & \gamma_n & \beta_n    \end{matrix} &
   \grdiag_{n} 
\end{bmatrix} = 
\begin{bmatrix}\begin{matrix} T_{n-2} &  \begin{matrix} 0 \\ \beta_{n-1} \end{matrix} 
   \\
\begin{matrix} 0 & \beta_{n-1}    \end{matrix} &
   \grdiag_{n-1} 
\end{matrix}  &  \begin{matrix} 0 \\ \gamma_n \\ \beta_n \end{matrix} 
   \\
\begin{matrix} \hspace*{-4mm}0 &  & \hspace*{-1mm}\gamma_n & \hspace*{1mm}\beta_n    \end{matrix} &
   \grdiag_{n} 
\end{bmatrix}  
\end{equation}
%
%
Define the vector $d_n \in \R^{n}$ by  
\begin{equation}\label{defdn} T_n d_n := e_n,  \end{equation}
where $e_n \in \R^n$ is the $n$th unit vector $e_n = (0,\ldots,0,1)^T$. 
In the following, we denote the last entry of $d_n$ by $\tau_n$, i.e.,  
\be\label{deftau} d_n = \begin{bmatrix} * \\ \tau_n \end{bmatrix}. \ee 
We show that with some $\xi_n \in \R$, we have the recursion 
\be\label{plugin} c_n = \begin{bmatrix} c_{n-1} \\ 0 \end{bmatrix} + \xi_n d_n  .\ee

Indeed, at first assume that $n$ is a rational step, i.e., $n = 2k$. Then plugging in 
\eqref{plugin} into \eqref{defcn}, noting that $T_{n-1} c_{n-1} = \beta e_1$ and $T_n d_n = e_n$,  yields the condition 
\[  \begin{bmatrix} 0 \\ \beta_n c_{n-1;n-1} \end{bmatrix} + 
\xi_n  e_{n} = 0, \]
where we denote by $c_{n-1;k}$ the $k$-th entry of the vector $c_{n-1}$. 
This gives 
\[ \xi_n = -\beta_n   c_{n-1;n-1}  = -\beta_n c_{n-1}^T e_{n-1}. \]
Next, we derive a recursion for $d_n$.  Let $\sigma_n$ be a real number in the ansatz 
\be\label{defdnrec}  d_n = \sigma_n   \begin{bmatrix}  d_{n-1} \\  0 \end{bmatrix} + \tau_n  e_n, \qquad 
\text{i.e.,} \qquad  d_n =  \begin{bmatrix}  * \\ \sigma_n \tau_{n-1} \\  \tau_n \end{bmatrix}, 
\ee
with $\tau_n, \sigma_n \in \R$. The defining equation for $d_n$, \eqref{defdn}, 
yields the condition for $\tau_n, \sigma_n $  
(noting that $T_{n-1} d_{n-1} = e_{n-1}$ and \eqref{strrat}) 
\[  \begin{bmatrix}0 \\ \sigma_n   \\ \sigma_n \beta_n d_{n-1;n-1} \end{bmatrix} 
+ \tau_n \begin{bmatrix} 0 \\ \beta_n  \\ \grdiag_n \end{bmatrix}  = e_n,\]
 which can be resolved, noting that $ d_{n-1;n-1} = \tau_{n-1}$ by definition, to
\[ \sigma_n = - \tau_n \beta_n \qquad  \sigma_n \beta_n \tau_{n-1} + \tau_n \grdiag_n = 1. \]
 This yields the recursion 
 \[ \tau_n = \frac{1}{\grdiag_n - \beta_n^2 \tau_{n-1}}  \qquad \sigma_n =  - \tau_n \beta_n.\]
Thus, both $c_n$ and $d_n$ can be calculated by a 1-step recursion via \eqref{defdnrec} and \eqref{plugin}.
 
Next, consider the Krylov step $n = 2k +1$ (with $n>1$).  Again we show a recursion 
\eqref{plugin}. Plugging this identity into the equation $T_n c_n = \beta e_n$ and 
noting $T_n d_n = e_n$ yields the condition  
\[  \xi_n = -\gamma_n c_{n-1;n-2} - \beta_n c_{n-1;n-1} = 
-\gamma_n c_{n-1}^T e_{n-2}  - \beta_n c_{n-1}^T e_{n-1}  .
\]
The recursion for $d_n$ is now more involved as we need a 2-step recursion. 
Indeed, we make the ansatz 
 \be\label{defdnkry} d_n =  \sigma_n \begin{bmatrix}  d_{n-1} \\ 0 \end{bmatrix} + 
 \tau_n e_n + \eta_n  \begin{bmatrix}  d_{n-2} \\ 0 \\  0 \end{bmatrix},  
 \ee
 with parameter $\sigma_n, \eta_n,\tau_n$, 
and plug this into  \eqref{defdn}.
The last three rows yield a linear equation 
for the coefficients $\sigma_n,\tau_n,\eta_n$:
\[ \sigma_n  \begin{bmatrix} 0 \\  1  \\ \beta_n d_{n-1;n-1} + \gamma_n d_{n-1;n-2} \end{bmatrix} 
+ \tau_n \begin{bmatrix}  \gamma_n \\ \beta_n \\ \grdiag_n \end{bmatrix} 
+ \eta_n  \begin{bmatrix}  1 \\ d_{n-2;n-2}  \beta_{n-1} \\ d_{n-2;n-2}  \gamma_n \end{bmatrix} =   \begin{bmatrix} 0 \\ 0 \\ 1 
   \end{bmatrix} \, .\]
Note the identities 
\[ d_{n-1;n-1} = \tau_{n-1} ,\qquad 
d_{n-1;n-2} = \sigma_{n-1} \tau_{n-2},  \qquad 
d_{n-2;n-2} = \tau_{n-2}. \]
The recursion for $c_n$ and $d_n$ is now given by 
\eqref{defdnkry} and \eqref{plugin}.

Let us now rewrite this recursion in terms of $x_n$. 
Define $x_n = Q_n c_n$, and $p_n = Q_n d_n$. 
Then \eqref{plugin} reads 
\be\label{xstep} x_n = x_{n-1} + \xi_n p_n, \ee
where 
\[ \xi_n = \begin{cases}  -\beta_n x_{n-1}^T q_{n-1} & n = 2k, \\
            -\gamma_n x_{n-1}^T q_{n-2}  - \beta_n x_{n-1}^T q_{n-1} &  n = 2k+1 ,  
           \end{cases} \]
whereas the recursion for $p_n$ is 
in the rational case $n = 2k$:  
\be\label{ratstep}  p_n = \sigma_n p_{n-1} +  \tau_n q_n .\ee
In the Krylov case $n = 2k +1$, we have 
\be\label{krystep}  p_n = \sigma_n p_{n-1} + \eta_n p_{n-2} + \tau_n q_n.  \ee
where $\xi_n, \sigma_n, \tau_n, \eta_n$ are calculated by the recursions above.
The recursion starts at $n = 1$ by a direct calculation of 
\[x_1 = \frac{\skp{\y,\y}}{\skp{\Op \y,\y}} \y ,\qquad 
q_1 = \frac{\y}{\|\y\|}, \qquad 
\tau_1 = \frac{\skp{\y,\y}}{\skp{\Op \y,\y}}, \qquad  p_1  = \tau_1 q_1. \]
We note that the rational step above for $p_n$ is valid for $n =2$. 
The full method for calculating least-squares solutions in $\KRS$ is now
presented in Algorithm~\ref{alg:lanz} as a pseudo-Matlab code. 
The routine $\text{GramSchmid}(v,Q)$ there means a orthgonalization {\em and} normalization step as 
in the calculation of $q_i$ via $\tilde{q}_i$ in the Arnoldi method in Algorithm~\ref{alg:Arnolid}.

\begin{algorithm}
\caption{Lanczos Algorithm for $\KRS$.}\label{alg:lanz}
\begin{algorithmic}[1]
 \STATE $\Op = A^*A$, $\y = A^*y$ 
 \STATE $\tau = \frac{\skp{\y,\y}}{\skp{\Op \y,\y}}$, 
 $Q(:,1) = \frac{\y}{\|\y\|}$, $p = \tau Q(:,1)$, 
 $x = \frac{\skp{\y,\y}}{\skp{\Op \y,\y}} \y$
 \FOR {$n = 2\ldots \MAXIT$}
  \IF[Rational step]{$n$ is even}
          \STATE $k := \frac{n}{2}$            
          \STATE $v =  (\Op +\alpha_k I)^{-1} Q(:,n-1)$ 
          \STATE $q = \text{GramSchmid}(v,Q)$ 
          \STATE $Q(:,n) = q$ 
          \STATE $\grdiag = q^T \Op q$, $\beta = Q(:,n-1)^T \Op q$ 
          \STATE $\tau_{old} = \tau$ 
          \STATE $\tau = 1/(\grdiag - \tau \beta^2)$ 
           \STATE $ \sigma = -\tau \beta$ 
           \STATE $p_{old} = p$ 
           \STATE $p = \sigma p + \tau q$ 
            \STATE $\xi = -\beta x^T Q(:,n-1)$ 
            \STATE $x = x + \xi p$ 
      \ELSE[Krylov step]   
           \STATE $v =  \Op  Q(:,n-1)$ 
          \STATE $q = \text{GramSchmid}(v,Q)$ 
          \STATE $Q(:,n) = q$ 
          \STATE $\beta_{old} = \beta$ 
          \STATE $\grdiag = q^T \Op q$, $\ $  $\beta = Q(:,n-1)^T \Op q$, $\ $  $\gamma=  Q(:,n-2)^T \Op q$ 
          \STATE $M = \begin{bmatrix} 0 & \gamma & 1\\ 1 & \beta & \tau_{old} \beta_{old} \\ 
          \beta\tau + \gamma \sigma \tau_{old} & \grdiag & \tau_{old} \gamma \end{bmatrix} $         
          \STATE $\tau_{old} = \tau$ 
          \STATE $\begin{bmatrix} \sigma \\ \tau \\ \eta \end{bmatrix} = M^{-1}  \begin{bmatrix} 0 \\ 0 \\ 1 \end{bmatrix}$
          \STATE $p_{new} = \sigma p + \eta p_{old} + \tau Q(:,n)$
          \STATE $p_{old} = p$ 
           \STATE $p = p_{new} $ 
          \STATE $\xi = -\gamma x^T Q(:,n-2) - \beta x^T Q(:,n-1)$  
          \STATE $ x = x + \xi p$ 
       \ENDIF                                   
  \ENDFOR
\end{algorithmic}
\end{algorithm}

This algorithm is the implementation of the formulas defined above, and it is well-defined as long as the 
Krylov space $\KRS$ does not break down. The involved divisions, e.g., 
$\tau = 1/(\grdiag - \tau \beta^2)$ or matrix inversion (e.g., $M^{-1}$) are well-defined in case of non-breakdown 
in exact arithmetic because $T_n$ is then always invertible, and the formulas are derived from solving 
\eqref{defdn}.


As in the standard case for $\KS$, 
the iterations of Algorithm~\ref{alg:lanz} satisfy certain 
orthogonality relations: 
\begin{proposition}
Define $p_n := p$ in Algorithm~\ref{alg:lanz} at iteration $n$. Then 
\[ \skp{\Op p_n, p_k} = 0 \qquad \text{ for } n \not = k. \]
\end{proposition}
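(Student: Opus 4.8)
The plan is to establish the $\Op$-orthogonality (conjugacy) of the direction vectors $p_n$ by translating the claim back into the coordinate vectors $d_n$ and exploiting the defining equations $T_n d_n = e_n$ together with the pentadiagonal structure of $T$. Since $p_n = Q_n d_n$ and $\Op Q = Q T$ (up to the relevant truncation), we have $\skp{\Op p_n, p_k} = \skp{\Op Q_n d_n, Q_k d_k} = d_n^T Q_n^T \Op Q_k d_k$. The key identity is that for indices $j \le \min(n,k)$ one has $Q_j^T \Op Q_j = T_j$, so the inner product collapses to an expression involving $T$ acting on the $d$-vectors. Concretely, I would show that $\skp{\Op p_n, p_k}$ reduces to $d_n^T e_k$ (or $d_k^T e_n$), i.e. to the $k$th component of $d_n$ extended by zeros, and then argue this vanishes for $n \ne k$.

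\emph{First} I would fix $n > k$ without loss of generality (the case $n < k$ follows by symmetry of $\skp{\Op \cdot, \cdot}$, since $\Op$ is symmetric). Using $\Op Q_k = Q T(:,1{:}k)$ restricted appropriately, and the fact that $T$ is pentadiagonal with the specific sparsity from Theorem~\ref{th2}, the vector $\Op p_k = \Op Q_k d_k$ lives in $\KRS^{k+2}$ at most, because $d_k$ is supported on the first $k$ coordinates and $\Op$ advances the index by at most two (one for a Krylov step, and the matrix $T$ has at most two nonzero subdiagonals). \emph{Then} I would compute $\skp{\Op p_k, p_n} = d_n^T Q_n^T \Op Q_k d_k$ and use that $Q_n^T \Op Q_k$ equals the first $n$ rows of the $k$ columns of $T$, which by the banded structure is nonzero only in rows $\le k+2$. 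Against this, $d_n$ must be tested; invoking the defining relation $T_n d_n = e_n$ lets me replace the leading banded block of $T$ acting on $d_n$ by the unit vector $e_n$, killing all components with row index strictly less than $n$.

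The cleanest route is to recognize that $\skp{\Op p_n, p_k} = d_n^T T d_k$ where the cross-term $T$ is the shared upper-left block (since both $d_n, d_k$ are supported in the first $\max(n,k)$ coordinates and $T_N$ restricted there agrees with the relevant $T_n, T_k$). Then $d_n^T T d_k = d_n^T (T d_k)$. For $k < n$, $T d_k$ is supported (by the defining equation $T_k d_k = e_k$ together with the two-band fill-in from rows $k+1, k+2$) on coordinates $e_k$ plus entries in rows $k+1$ and possibly $k+2$. Pairing with $d_n$ and using $T_n d_n = e_n$, i.e. that $d_n$ is $\Op$-orthogonal (through $T$) to all earlier unit-vector directions except the last, forces the sum to telescope to zero for $n - k \ge 1$. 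I would make this precise by an induction on the gap $n-k$, handling the boundary contributions from the two subdiagonals in the two cases (Krylov step versus rational step) separately, exactly as the parameters $\sigma_n, \tau_n, \eta_n$ were chosen in \eqref{defdnrec} and \eqref{defdnkry} precisely to annihilate these boundary terms.

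The main obstacle I anticipate is the bookkeeping at the band boundary: because $T$ has two nonzero subdiagonals, $T d_k$ is \emph{not} simply $e_k$ but $e_k$ plus contributions in rows $k+1$ and $k+2$ coming from entries $\beta_{k+1}, \gamma_{k+2}$. The nontrivial content of the proposition is that these near-diagonal leakage terms are exactly cancelled when paired against $d_n$ for $n > k$, and verifying this cancellation is where the specific recursion coefficients enter. I expect the rational/Krylov alternation to matter here, since a rational step contributes only a $\beta_n$ band while a Krylov step contributes both $\beta_n$ and $\gamma_n$, so the induction step will split into the two parities. The rest — rewriting $\skp{\Op p_n, p_k}$ in terms of $T$ and the $d$-vectors, and invoking symmetry — is routine linear algebra given the Arnoldi relation $\Op Q = QT$ from \eqref{stru}.
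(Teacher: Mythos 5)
Your setup is the right machinery and close in spirit to the paper's own proof --- work in coordinates via $p_n = Q_n d_n$, use $T = Q^T \Op Q$ and the defining relation $T_n d_n = e_n$ --- but your execution plan pairs things in the wrong order, and this creates a genuine gap. Writing $\tilde d_k, \tilde d_n \in \R^N$ for the zero-extended coefficient vectors, one has $\skp{\Op p_n, p_k} = \tilde d_k^T T \tilde d_n = \tilde d_n^T T \tilde d_k$. You choose to compute $T \tilde d_k$ first (for $k < n$), which gives $e_k$ plus nonzero leakage in rows $k+1$ and $k+2$, so the pairing becomes $d_{n;k} + a\, d_{n;k+1} + b\, d_{n;k+2}$. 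None of these components of $d_n$ vanishes individually, so your claimed reduction to ``$d_n^T e_k$'' is false as stated, and the cancellation you defer to an induction on the gap $n-k$ (split by parity, using $\sigma_n, \tau_n, \eta_n$) is precisely the content of the proposition: you have restated it, not proved it. As written, the argument is incomplete exactly at the step you flag as the ``main obstacle.''

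The obstacle is spurious: apply $T$ to the \emph{longer} vector instead. For $k < n$, rows $1,\dots,n$ of $T \tilde d_n$ equal $T_n d_n = e_n$, since the upper-left $n \times n$ block of $T$ is $T_n$ by definition, and whatever fill-in $T \tilde d_n$ has sits in rows beyond $n$ --- strictly outside the support of $\tilde d_k$, which lives in the first $k \le n-1$ coordinates. Hence $\skp{\Op p_n, p_k} = \tilde d_k^T T \tilde d_n = \tilde d_k^T e_n = 0$ with no cancellation, no induction, no parity cases, and in fact no use of the pentadiagonal structure at all (bandedness only matters in your direction of pairing, where it bounds the leakage you then cannot dispose of). This is exactly the paper's one-line argument in coordinates: from $Q_n^T \Op p_n = T_n d_n = e_n$ one reads off $\Op p_n \in q_n + \spa\{q_j : j > n\}$, which is orthogonal to $p_k \in \spa\{q_1, \dots, q_k\}$ for $k < n$; the case $k > n$ follows by symmetry of $\Op$, as you correctly note. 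So your plan can be repaired within your own framework by a one-line reordering, but as proposed it does not close.
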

\begin{proof}
Note that by construction $p_n = Q_n T_n^{-1} e_n$. 
Since $T_n = Q_n^T \Op Q_n$, we have that $Q_n^T \Op Q_n d_n = e_n$ and 
hence $Q_n^T \Op p_n = e_n$. As a consequence, $\Op p_n =  q_n + \spa\{q_j, j>n\}$. 
Thus, for $k< n$, 
\[ \skp{\Op p_n, p_k}  = \skp{q_n+ \spa\{q_j, j>n\}, p_k} = 0 \]
since $p_k \in \spa\{q_1,\ldots, q_k\}$ and the orthogonality of $q_i$.
By symmetry, the results holds also for $k>n$. 
\end{proof} 
This means that the matrix $\skp{\Op p_i, p_j}$, $i,j = 1\ldots,N$, is diagonal. 
\begin{proposition}
Consider the residual for the normal equations 
\[ r_n = \Op x_n - \y \]
for the iterations $x_n = x$ in Algorithm~\ref{alg:lanz} at iteration $n$.
The Gramian matrix for the residual vectors has the 
following nonzero entries:
\[ R_{i,j} := \skp{r_i,r_j} 
= \begin{bmatrix} * & * & & & & \\ 
 *  & * &  & & &\\
&  & * & * & &\\ 
& &  * &* &  &\\ 
& & &  & * & * \\
& & & & *  &  *
  \end{bmatrix}, 
\]
i.e., it is  tridiagonal with additional zeros in the lower off-diagonals for even  and 
in the upper off-diagonal for odd indices. 
Moreover, we have 
\be\label{ort} \skp{r_n,p_i} = 0, \quad \text{ for all }  i=1,\ldots, n. \ee
\end{proposition}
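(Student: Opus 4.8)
The plan is to reduce both assertions to a single Galerkin-type orthogonality, namely $r_n \perp \KRS^n$, and then to locate each residual $r_i$ inside a mixed rational Krylov space using the inclusions \eqref{resrep}--\eqref{resrepkry}.

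First I would establish the fundamental relation $\skp{r_n, z} = 0$ for every $z \in \KRS^n$. The algorithm produces $x_n = Q_n c_n$ with $T_n c_n = \beta e_1$ and $T_n = Q_n^T \Op Q_n$, while $Q_n^T \y = \beta e_1$ since $q_1 = \y/\|\y\|$ and the columns $q_i$ are orthonormal. Hence $Q_n^T r_n = Q_n^T \Op Q_n c_n - Q_n^T \y = T_n c_n - \beta e_1 = 0$, and because the columns of $Q_n$ span $\KRS^n$ (Proposition~\ref{prop4}), this is exactly $r_n \perp \KRS^n$. The orthogonality \eqref{ort} is then immediate: by construction $p_i = Q_i d_i \in \spa\{q_1, \dots, q_i\} = \KRS^i \subseteq \KRS^n$ for every $i \le n$, so $\skp{r_n, p_i} = 0$.

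For the sparsity of the Gramian I would first determine, for each $i$, a small mixed space containing the residual $r_i$. For an even index $i = 2k$ the inclusion $\Op \KRS^{2k} \subseteq \KRS^{2k+1}$ from \eqref{resrepkry}, together with $\y \in \KRS^1 \subseteq \KRS^{2k+1}$, gives $r_{2k} = \Op x_{2k} - \y \in \KRS^{2k+1}$. For an odd index $i = 2k+1$ the direct inclusion $\Op \KRS^{2k+1} \subseteq \KRS^{2k+2}$ is \emph{not} available, so I would instead chain the nesting $\KRS^{2k+1} \subseteq \KRS^{2k+2}$ with $\Op \KRS^{2k+2} \subseteq \KRS^{2k+3}$ (again \eqref{resrepkry}) to obtain $r_{2k+1} \in \KRS^{2k+3}$. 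In summary, $r_i \in \KRS^{i+1}$ for even $i$ and $r_i \in \KRS^{i+2}$ for odd $i$.

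Combining these inclusions with $r_j \perp \KRS^j$ then yields the pattern. For $i < j$: if $i$ is even then $r_i \in \KRS^{i+1} \subseteq \KRS^j$, so $\skp{r_i, r_j} = 0$ for every $j > i$; if $i$ is odd then $r_i \in \KRS^{i+2} \subseteq \KRS^j$ as soon as $j \ge i+2$, leaving only $j = i+1$ possibly nonzero. By symmetry of the Gramian the mirror statement holds below the diagonal, so the only surviving entries are the diagonal and the off-diagonal pairs $(2k-1, 2k)$ --- that is, the $2 \times 2$ block-diagonal (equivalently, tridiagonal with the stated additional zeros) form. I expect the main obstacle to be exactly the asymmetry between the two cases: the odd residual must be absorbed two steps later than the even one, and it is this two-step delay, bookkept through the floor-function indexing of \eqref{resrep}--\eqref{resrepkry}, that produces the asymmetric extra zeros; the remaining orthogonality arguments are routine.
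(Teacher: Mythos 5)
Your proof is correct, but it takes a genuinely different route from the paper's. The paper works entirely in coordinates with respect to the Arnoldi basis: writing $\tilde{c}_n$ for the zero-padded coefficient vector, it uses the pentadiagonal structure \eqref{strrat}, \eqref{strkry} of $T$ from Theorem~\ref{th2} to compute the residual coordinates explicitly, $\tilde{r}_n = \beta_{n+1}e_{n+1}$ in a rational step \eqref{resrat} and $\tilde{r}_n = \beta_{n+1}e_{n+1} + \gamma_{n+2}e_{n+2}$ in a Krylov step \eqref{reskry}; the Gramian pattern is then read off from disjointness of supports, and \eqref{ort} from the vanishing of the first $n$ coordinates. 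You instead never touch the structure of $T$: you extract the Galerkin orthogonality $r_n \perp \KRS^n$ directly from \eqref{defcn} via $Q_n^T r_n = T_n c_n - \beta e_1 = 0$, and then localize $r_i$ in $\KRS^{i+1}$ (even $i$) or $\KRS^{i+2}$ (odd $i$) using only the inclusions \eqref{resrep}--\eqref{resrepkry} and the nesting of the spaces --- your two-step detour $\KRS^{2k+1} \subseteq \KRS^{2k+2}$, $\Op\KRS^{2k+2} \subseteq \KRS^{2k+3}$ for odd indices is exactly the right fix, and it is the functional-calculus counterpart of the extra term $\gamma_{n+2}e_{n+2}$ in \eqref{reskry} (an odd residual has numerator degree too high to clear the next denominator factor, cf.\ \eqref{reskrsnum}). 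Your argument is more conceptual and independent of Theorem~\ref{th2}, which is a genuine economy; what the paper's computation buys in exchange is sharper information, namely the explicit identities $r_n = \beta_{n+1}q_{n+1}$ (resp.\ $\beta_{n+1}q_{n+1} + \gamma_{n+2}q_{n+2}$), which are reused later in Section~\ref{sec:ratcg} (the step ``$r_{n-1} \sim q_n$'' behind \eqref{krystepmod}), so for the paper's larger purpose the coordinate form is not dispensable. The two proofs are equivalent in content, since your pair of facts ($r_n \perp \KRS^n$ together with $r_n \in \KRS^{n+1}$ or $\KRS^{n+2}$) is precisely the support statement of the paper, given that $(q_i)$ is an orthonormal basis by Proposition~\ref{prop4}.
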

\begin{proof}
Let $\tilde{c}_n \in \R^N$ be a vector with values $c_n$ at position $1,\ldots,n$ and $0$ at the rest.
By the structure \eqref{stru},  \eqref{strkry}, \eqref{strrat}  
it follows for a rational step that 
\be\label{resrat} \tilde{r}_n := T \tilde{c}_n - \beta e_1 = \beta_{n+1} e_{n+1} \qquad \text{ for } n = 2k \ee
while in a Krylov step
\be\label{reskry} \tilde{r}_n:= T \tilde{c}_n - \beta e_1 = \beta_{n+1} e_{n+1} + \gamma_{n+2} e_{n+1}  \qquad \text{ for } n = 2k+1.  \ee
Thus in case of $n = 2k$, $\tilde{r}_n^T\tilde{r}_{n+1} = 0$, which verifies the $0$ in the first offdiagonals. 
In any case we have  $\tilde{r}_n^T\tilde{r}_{n+k} = 0$ for $k\geq 2$. The claimed matrix structure is now verified by 
the observation that $r_n = Q \tilde{r}_n$ and the orthogonality of $Q$. 

Identity \eqref{ort} follows  since $d_i$ is a vector of length $i$, hence $p_i \in \KRS^i$, while 
$\tilde{r}_n$ has zero entries in the first $n$ components. Thus $r_n$ is orthogonal to  $\KRS^n$ and hence to all $p_i$.
\end{proof} 


\section{The rational CG method}\label{sec:ratcg}
The disadvantage of the previous Lanczos method in Algorithm~\ref{alg:lanz}  
is that the orthogonal vectors $q_i$ 
has to be saved, and thus the memory requirement might get large. 
 In this section we state an equivalent algorithm that avoids saving the $q_i$, and it is 
 directly based on one- and two-step recurrence relations for the vectors $p_n$ and $x_n$, 
 which have the same meaning as in the previous section. This is analogous to the derivation 
 of the CG method from the  D-Lanczos method from the orthogonality relations; cf.~\cite[Section~6.7]{saadbook}. 
 
We consider the iteration \eqref{xstep}, \eqref{ratstep}, and \eqref{krystep}:
First consider \eqref{xstep}: Assume that $p_n$ has been computed, and denote by $r_n$ the residual $\Op x_n -\y$. 
Then 
\[ x_n = x_{n-1} + \xi_n p_n  \Rightarrow r_n = r_{n-1} + \xi_n \Op p_n. \]
We require that $r_n$ is orthogonal to $p_n$, hence 
\be\label{xin} 
\xi_n = -\frac{\skp{r_{n-1},p_{n}}}{\skp{\Op p_n, p_n}} .
\ee
Next we consider the iterations for $p_n$. 
Rewriting it without usage of the $q_i$ is not so difficult for a Krylov step:
Let $n = 2k +1$ and consider \eqref{krystep}. It follows from \eqref{resrat} (after multiplying with $Q$) 
that  $r_{n-1} \sim q_n$. Thus, the recursion   \eqref{krystep} can be replaced by 
\be\label{krystepmod}  p_n = \sigma_n p_{n-1} + \eta_n p_{n-2} + \tau_n' r_{n-1},  \ee
where $r_{n-1}$ is the  residual of the previous step 
and $\tau_n'$ is some constant (possibly different from $\tau_n$). 
To fix the constants, we observe from \eqref{xin}  that  $p_n$ may be rescaled by a multiplicative constant without changing the method.  Thus we set $\tau_n' =1$. The other constants  are obtained by 
forcing  the orthogonality relations $\skp{\Op p_i, p_j } \sim \delta_{ij}$ to hold. This yields 
the iteration for a Krylov step, i.e., $n = 2k +1$:
\be\label{cfeq} 
p_n = -\frac{\skp{r_{n-1}, \Op p_{n-1}}}{\skp{\Op p_{n-1},p_{n-1}}} p_{n-1} 
-\frac{\skp{r_{n-1}, \Op p_{n-2}}}{\skp{\Op p_{n-2},p_{n-2}}} p_{n-2}  + r_{n-1} .
\ee

The recursion for a rational step $n = 2k$ is more involved. Considering \eqref{ratstep}, the problem 
is that  $q_n$ does not have a simple expression in terms of residuals. However, 
by \eqref{reskry}, \eqref{resrat},
it may be written as a linear combination of $r_{n}$ and $r_{n-1}$. From    \eqref{xstep} it follows that 
$r_n = r_{n-1} - \xi_n \Op p_n$. Thus $p_n$ can be written as linear combination of $p_{n-1}$, $r_{n-1}$, and 
an implicit term $\Op p_n$. Keeping in mind \eqref{resrep} and that $p_{n}$ should represent an element in 
$\KRS^n$ leads to idea that the factor in front of $\Op p_n$ should be an $-\alpha_k^{-1}$. Thus, 
the form of the rational step should be (again using on  rescaling degree of freedom for the factor in front of $r_{n-1}$) 
\[ p_n = (\Op +\alpha_k I)^{-1} \left[ \factor_n p_{n-1} + r_{n-1} \right]. \]
It remains to fix the factor $\factor_n$, which is obtained through the orthogonality relations as before. 
Putting the operator $ (\Op +\alpha_k I)$  on the right-hand side yields 
\be\label{bab}  \skp{(\Op +\alpha_k I) p_n, p_{n-1}} = \factor_n \skp{p_{n-1},p_{n-1}}  + \skp{r_{n-1},p_{n-1}} .\ee
Thus, requiring $\skp{\Op p_n,p_{n-1}} = 0$  and  $\skp{r_{n-1},p_{n-1}} = 0$ yields 
\begin{align*}  \factor_n &= \frac{\alpha_k  \skp{p_n, p_{n-1}}} {\skp{p_{n-1},p_{n-1}} }  = 
 \frac{\alpha_k  \skp{(\Op +\alpha_k I)^{-1}[ \factor_n p_{n-1} + r_{n-1}]  , p_{n-1}}} {\skp{p_{n-1},p_{n-1}} },
\end{align*}
which gives 
\begin{align} \factor_n &=  \frac{\alpha_k \skp{(\Op +\alpha_k I)^{-1} p_{n-1} ,r_{n-1}}}
{\skp{p_{n-1},p_{n-1}} -\alpha_k \skp{(\Op +\alpha_k I)^{-1} p_{n-1} , p_{n-1}}}  \nonumber \\
&= 
\frac{\alpha_k \skp{ (\Op +\alpha_k I)^{-1} p_{n-1} ,r_{n-1}}}{\skp{\Op p_{n-1},  (\Op +\alpha_k I)^{-1} p_{n-1}}}  . \label{rhoform}
\end{align}
Another equivalent formula for $\factor_n$ is as follows: 
\begin{align} \factor_n &=  \frac{\skp{[\Op + \alpha_k - \Op ] p_{n-1} , (\Op +\alpha_k I)^{-1} r_{n-1}}}
{\skp{\Op p_{n-1},  (\Op +\alpha_k I)^{-1} p_{n-1}}} \nonumber  \\
& = 
 \frac{\skp{p_{n-1} , r_{n-1}} - \skp{\Op p_{n-1} , (\Op +\alpha_k I)^{-1} r_{n-1}}}
{\skp{\Op p_{n-1},  (\Op +\alpha_k I)^{-1} p_{n-1}}}  \nonumber \\
& = - \frac{ \skp{\Op p_{n-1} ,(\Op +\alpha_k I)^{-1} r_{n-1}}}{\skp{\Op p_{n-1},  (\Op +\alpha_k I)^{-1} p_{n-1}}}, \label{alteta}
\end{align}
exploiting the orthogonality $\skp{p_{n-1} , r_{n-1}}  = 0$. 

Thus, the formula for $p_n$ can be computed from \eqref{rhoform} by the steps 
\begin{align*} \pone_n&:= (\Op +\alpha_k I)^{-1} p_{n-1}, \\
 \ptwo_n&:= (\Op +\alpha_k I)^{-1} r_{n-1},  \\
 p_n &:= \frac{\alpha_k \skp{\pone_n, r_{n-1}}}{\skp{\Op p_{n-1}, \pone_n}} \pone_n + \ptwo_n.
\end{align*}

\begin{remark}
The computation of $p_n$ in a rational step thus requires solving for $\pone_n$ and $\ptwo_n$, i.e., 
two linear solves with Tikhonov matrix $(\Op +\alpha_k I)$, which is an overhead compared to the Lanczos method above that needs only one. However, we did not find a step that requires only one linear solve. Note, however, that the system matrix 
(Tikhonov matrix) is in both cases that same, only the right-hand sides differ. 
In Matlab, this can be computed  by the statement 
\be\label{compcg} [\pone_n \ \ptwo_n] = (\Op +\alpha_k I)\backslash[p_{n-1} \  r_{n-1}], \ee 
and we found that it does not require much more additional computation  time. Observe that for a direct solver, the main 
work is in the matrix decomposition of the system matrix, which has to be performed only once per step. Using 
this for two right-hand sides is then negligible overhead work. 

Interestingly, if one insists on using only one linear system solve with one right-hand side per step, then this can be achieved by a  formula that 
uses complex variables. Indeed,  $\factor_n$ and the formula for $p_n$ can be rewritten by \eqref{alteta} 
\begin{align*} p_n &=  
\frac{1}{\skp{\Op p_{n-1},  \pone_n}} 
\Big[ 
 -\skp{\Op p_{n-1} ,\ptwo_n}  \pone_n + 
 \skp{\Op p_{n-1},  \pone_n} \ptwo_n \Big] \\
& = \frac{1}{\skp{\Op p_{n-1},  \pone_n}} 
\mathcal{I} \left[ \skp{\Op  p_{n-1}, \overline{\pone_n  + i \ptwo_n} } (\pone_n  + i \ptwo_n)  \right]. 
\end{align*}
Here $\overline{\pone_n  + i \ptwo_n}$ denotes the convex conjugate,   $\mathcal{I}$ is the imaginary part,
and $i$ the imaginary unit. 
In this formula, the common factor  $\frac{1}{\skp{\Op p_{n-1},  \pone_n}}$ can be ignored since a 
multiplicative factor of 
$p_n$ does not change the iteration. Moreover, in this formula we only require to calculate 
$(\Op +\alpha_k I)^{-1} [p_{n-1} + i r_{n-1}]$, i.e., one linear solves with only one complex right-hand side.  
In the numerical calculations, however, we did not find much of a benefit of using this formula. 
\end{remark}

Finally, we are in the position to present the full rational CG algorithm. The only thing remaining open is the
initial value for the  $p$-variables, i.e., $p_1$. Setting $x_{-1} = 0$ and by \eqref{xstep}, gives $p_1 = x$, 
noting the scaling freedom for the $p$-variables. 
Together, we obtain Algorithm~\ref{alg:ratcg} for solving the normal equations $\Op x = \y$. 

\begin{algorithm}
\caption{Rational CG method for $\KRS$.}\label{alg:ratcg}
\begin{algorithmic}[1] 
\STATE $\Op = A^*A$, $\y = A^*y$ 
 \STATE  $x =  \frac{\skp{\y,\y}}{\skp{\Op \y,\y}} \y$. 
  \STATE  $r =  \Op x - \y$. 
  \STATE $p  = x$.                   \COMMENT{$x_1,p_1,r_1$ are defined} 
 \FOR{$n = 2\ldots \MAXIT$}
      \IF{$n$ is even}  
          \STATE $k := \frac{n}{2}$            
           \STATE $\pone:= (\Op +\alpha_k I)^{-1} p$ 
           \STATE  $\ptwo:= (\Op +\alpha_k I)^{-1} r$   \COMMENT{Implementation as in \eqref{compcg}}
            \STATE $\zeta := -\frac{\skp{\Op r, \pone}}{\skp{\Op p, \pone}} $
            \STATE   $\pold = p$           \COMMENT{$\pold = p_{n-1}$ }  
            \STATE  $p := \zeta\pone + \ptwo$    \COMMENT{$p = p_{n}$ }  
      \ELSE     
           \STATE  $p = -\frac{\skp{r,\Op p}}{\skp{\Op p,p}} p - \frac{\skp{r,\Op \pold}}{\skp{\Op \pold,\pold}} \pold + r$
       \ENDIF                                   
            \IF{$\Op p = 0$} 
                 \STATE Terminate algorithm \COMMENT{Failure by Breakdown}
                 \ELSE
           \STATE  $\eta := \frac{\skp{r,p}}{\skp{\Op p,p}}$ 
           \STATE  $ x = x - \eta p $               \COMMENT{Solution $x_n$ at step $n$}
           \STATE  $ r = r - \eta \Op p $           \COMMENT{Step $n$ finished } 
           \ENDIF
  \ENDFOR
\end{algorithmic}
\end{algorithm}

Let us comment about the well-definedness of the algorithm. As long as the failure criterion 
$\Op p_n = 0$ is not satisfied, the fractions in the calculations 
are all well-defined: This is obvious for the terms $\skp{\Op p_n,p_n}$, $\skp{\Op \pold,\pold}$. 
The denominator $\skp{\Op p_n, \pone_n}$ is well-defined under the non-failure condition since 
\[ \skp{\Op p, \pone_n} = \skp{\Op (\Op +\alpha_k I) \pone_n,\pone_n} = 
\|\Op \pone_n\|^2 + \alpha_k \skp{\Op \pone_n,\pone_n}, 
\]
which is $0$ if and only if $0 = \Op \pone_n =  (\Op +\alpha_k I)^{-1} \Op p$. Since $\Op +\alpha_k I$ has no nullspace, 
this can only happen if $\Op p_n = 0$, which is excluded by the non-failure condition.

Finally, we show that the rational CG method does what it is supposed to do. 
\begin{theorem}\label{main}
Denote by $x_n,p_n,r_n$ the respective variables $x,p,r$ at iteration $n$ in  
Algorithm~\ref{alg:ratcg}. Then, they satisfy the orthogonality conditions 
\be\label{verify} \skp{\Op p_{n} ,p_j} = 0, \quad j=1,\ldots, n-1, \qquad \text{ and } 
 \qquad \skp{r_{n}, p_j} = 0, \qquad j=1,\ldots, n.
\ee
As a result, 
Algorithm~\ref{alg:ratcg}
computes at iteration $n$ the solution to the least-squares problem in the 
mixed Krylov space $\KRS^n$ \eqref{eq:xkrs}.
\end{theorem}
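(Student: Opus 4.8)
The plan is to prove the two orthogonality relations in \eqref{verify} by induction on $n$, and then to read off the least-squares property as a corollary. The corollary is the easy part: once \eqref{verify} holds, $x_n\in\KRS^n$ (since $x_n=x_{n-1}-\eta_n p_n$ with $p_n\in\KRS^n$), and $r_n=\Op x_n-\y$ is orthogonal to $\spa\{p_1,\dots,p_n\}$; because the $p_j$ are mutually $\Op$-orthogonal and $\Op p_j\neq 0$ under the non-failure condition, they are linearly independent elements of the $n$-dimensional space $\KRS^n$ and hence span it, so $r_n\perp\KRS^n$, which is exactly the normal-equation characterization of the minimizer $x_{\KRS,n}$ in \eqref{eq:xkrs}. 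The base case $n=1$ is immediate: with $p_1=x_1=\frac{\skp{\y,\y}}{\skp{\Op\y,\y}}\,\y$ one checks $\skp{r_1,p_1}=0$ directly, the conjugacy condition being vacuous. Throughout the induction I would also carry along the membership $p_i\in\KRS^i$, which holds by the inclusions \eqref{resrep}, \eqref{resrepkry}.

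Two reductions organize the inductive step. First, the residual relation reduces to the conjugacy relation: from the update $r_n=r_{n-1}-\eta_n\Op p_n$ one gets $\skp{r_n,p_j}=\skp{r_{n-1},p_j}-\eta_n\skp{\Op p_n,p_j}$, where the first term vanishes for $j\le n-1$ by the inductive hypothesis, the second by the conjugacy at step $n$, and $\skp{r_n,p_n}=0$ holds by the very choice of $\eta_n$ (cf.\ \eqref{xin}). Hence it suffices to prove $\skp{\Op p_n,p_j}=0$ for all $j\le n-1$. Second, the adjacent cases are true by construction: the coefficients in \eqref{cfeq} are defined precisely so that $\skp{\Op p_n,p_{n-1}}=\skp{\Op p_n,p_{n-2}}=0$ in a Krylov step, and the factor $\factor_n$ in \eqref{rhoform} is defined precisely so that $\skp{\Op p_n,p_{n-1}}=0$ in a rational step. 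What remains is conjugacy against the \emph{non-adjacent} vectors.

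For a Krylov step $n=2k+1$ the remaining cases $j\le n-3$ follow the classical CG pattern, because the recursion \eqref{krystep}, \eqref{cfeq} for $p_n$ is free of any resolvent. Expanding $\Op p_n$ through this recursion, the contributions $\skp{\Op p_{n-1},p_j}$ and $\skp{\Op p_{n-2},p_j}$ are annihilated by the inductive conjugacy, leaving $\skp{\Op r_{n-1},p_j}=\skp{r_{n-1},\Op p_j}$. Rewriting $\Op p_j=\eta_j^{-1}(r_{j-1}-r_j)$ via the residual update reduces this to $\skp{r_{n-1},r_{j-1}}$ and $\skp{r_{n-1},r_j}$, which vanish because $r_{n-1}\perp\KRS^{n-1}$ by hypothesis while Proposition~\ref{prop2}, on tracking numerator degrees and denominator factors, places $r_i\in\KRS^{i+2}\subseteq\KRS^{n-1}$ for the relevant indices.

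The main obstacle is the rational step $n=2k$, where the full range $j\le n-2$ must be treated and the recursion $p_n=(\Op+\alpha_k I)^{-1}[\factor_n p_{n-1}+r_{n-1}]$ contains the resolvent $(\Op+\alpha_k I)^{-1}$. Multiplying by $\Op+\alpha_k I$ yields $\skp{\Op p_n,p_j}=\factor_n\skp{p_{n-1},p_j}-\alpha_k\skp{p_n,p_j}$, which now involves the \emph{ordinary} inner products $\skp{p_{n-1},p_j}$ and $\skp{p_n,p_j}$ rather than $\Op$-weighted ones, so the classical telescoping breaks down; and since the conjugacy and residual relations at step $n$ are equivalent to one another, no purely algebraic rearrangement escapes the resulting circularity. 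I would resolve this by importing the rational Krylov structure: writing $\skp{\Op p_n,p_j}=\skp{p_n,\Op p_j}$ and expanding $p_n$ in resolvent form, the only term not immediately killed by the inductive residual orthogonality is $\skp{r_{n-1},(\Op+\alpha_k I)^{-1}p_j}$, and the crucial point is that $(\Op+\alpha_k I)^{-1}p_j$ fails to lie in $\KRS^{n-1}$ by exactly the one denominator factor $(x+\alpha_k)$. Quantifying this defect through Proposition~\ref{prop2} and the inclusions \eqref{resrep} — equivalently, through the pentadiagonal structure of Theorem~\ref{th2} — one shows that its component outside $\KRS^{n-1}$ is proportional to the single new direction of $\KRS^n$ and cancels the $\factor_n\skp{p_{n-1},p_j}$ contribution. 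This bookkeeping with numerator degrees and the distinguished factor $\alpha_k$ is where essentially all the technical effort lies. An attractive alternative would be to prove that the rational-CG vector $p_n$ coincides up to scaling with the Lanczos vector $Q_n d_n$ of \eqref{defdn}, since being $\Op$-orthogonal to $\KRS^{n-1}$ characterizes that vector uniquely within $\KRS^n$, and then to inherit \eqref{verify} from the Lanczos relations already established.
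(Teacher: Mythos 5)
Your overall architecture --- induction on $n$, carrying the memberships $p_i\in\KRS^i$ and $r_i\in\Q_{\KRS^i}$ along, reducing the residual orthogonality to the conjugacy relation via $r_n=r_{n-1}-\eta_n\Op p_n$ and the choice of $\eta_n$, disposing of the adjacent indices by construction of the coefficients, and handling the non-adjacent indices in a Krylov step by moving $\Op$ onto $p_j$ --- is exactly the paper's, and those parts are sound. One small caveat in the Krylov step: your rewriting $\Op p_j=\eta_j^{-1}(r_{j-1}-r_j)$ silently assumes $\eta_j\neq 0$, which is not guaranteed a priori; the paper instead uses Theorem~\ref{th2} and \eqref{stru} to conclude $\Op p_j\in\KRS^{n-1}$ directly, so that $\skp{r_{n-1},\Op p_j}=0$ follows unconditionally from the induction hypothesis and the fact that $p_1,\dots,p_{n-1}$ span $\KRS^{n-1}$.

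The genuine gap is in the rational step, precisely where you say ``essentially all the technical effort lies.'' You assert, without a mechanism, that the component of $(\Op+\alpha_k I)^{-1}p_j$ outside $\KRS^{n-1}$ ``cancels the $\factor_n\skp{p_{n-1},p_j}$ contribution.'' That per-index scalar identity is exactly equivalent to the claim $\skp{\Op p_n,p_j}=0$ being proved, so as written it is a restatement rather than an argument; and the degree/denominator bookkeeping of Proposition~\ref{prop2} cannot by itself fix the constants involved, since the one free parameter $\factor_n$ was already spent enforcing the single adjacent condition $\skp{\Op p_n,p_{n-1}}=0$. The paper resolves this with a global, not per-index, argument that is absent from your sketch: from $\Op p_n+\alpha_k p_n=\factor_n p_{n-1}+r_{n-1}$ it first derives the \emph{shifted} family $\skp{\Op p_n,(\Op+\alpha_k I)p_j}=0$ for all $j\le n-2$ (eq.~\eqref{id2}, both right-hand terms vanishing by induction), and then decouples it: writing $\POP{n-2}=[p_1\cdots p_{n-2}]$, expanding $\Op\POP{n-2}\vec c$ in the basis $p_1,\dots,p_{n-1}$ as in \eqref{id4}, and using the induction hypothesis to identify the coefficients, the relations become $\skp{\Op p_n,\POP{n-2}(G^{-1}H+\alpha_k I)\vec c}=0$ with $G={\POP{n-2}}^T\Op\POP{n-2}$ diagonal positive and $H={\POP{n-2}}^T\Op^2\POP{n-2}$ positive semidefinite; since $H+\alpha_k G$ is positive definite, one may solve $(G^{-1}H+\alpha_k I)\vec c=e_j$ for each $j$ and extract the individual orthogonalities. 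This invertibility/decoupling step is the missing idea. Your fallback --- identifying $p_n$ up to scale with the Lanczos direction $Q_nd_n$ from \eqref{defdn} --- is circular as sketched: that identification is equivalent to $\Op$-orthogonality of $p_n$ to all of $\KRS^{n-1}$, while the recursion only builds in orthogonality to $p_{n-1}$, so it cannot be invoked before \eqref{verify} is established.
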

\begin{proof}
With the notation in the theorem, 
at the initialization, we obviously have that $x_1,p_1 \in \KRS^1$,  and $r_1 \in \Q_{\KRS^1} \subset \KRS^2$. 
Suppose that $x_{n-1},p_{n-1} \in \KRS^{n-1}$ and $r_{n-1} \in  \Q_{\KRS^{n-1}} \subset \KRS^n$. In a Krylov step, it follows immediately that $p_n \in \KRS^n$. 
In a rational step we have by \eqref{resrep} that $p_n \in \KRS^n$. 
Thus, in any case  $x_n \in \KRS^n$ and $r_{n} \in  \Q_{\KRS^{n}} \subset \KRS^{n+1}$.
By induction we have proven that 
\[ x_n,p_n  \in \KRS^n \qquad r_n \in \Q_{\KRS^n}. \]
Next, we verify the orthogonality conditions 
\eqref{verify}
by induction. 
Assume that \eqref{verify} holds with $n$ replaced by $n-1$. 

In a Krylov step, $n = 2k+1$.  it follows by construction of the coefficients, 
cf.~\eqref{cfeq}, that 
\[ \skp{\Op p_n, p_{j}} = 0, \quad j= n-1, n-2. \]
For $j<n-2$ we find by the induction hypothesis that, with some coefficients $b_1,b_2$
from \eqref{cfeq}, 
\[ \skp{\Op p_n, p_j} = 
\skp{b_1 p_{n-1} + b_2 p_{n-1} + r_{n-1} , \Op p_j} = 
\skp{r_{n-1},\Op p_j}. \]
However, $\Op p_j \in \KRS^{n-1}$, thus the right-hand side vanishes 
by induction hypothesis. Thus $\skp{\Op p_{n} ,p_j} \sim \delta_{n,j}$ is shown. 
The orthogonality of the residuals and $p_j$ in \eqref{verify} follows by $r_n = r_{n-1} -\eta \Op p_n$ 
and by construction: Taking the inner product with $p_n$ leads to 
$\skp{r_n,p_n} = 0$ by the definition of $\eta$, while $\skp{r_n,p_j}=0$, for 
$j= 1,\ldots, n-1$, by 
the induction hypothesis and the just proven orthogonality. This 
settles the Krylov step case.

For a rational step $n = 2k$, it follows by construction, cf.~\eqref{bab}, 
and the induction hypothesis for $n-1$ that 
$$\skp{\Op p_n, p_{n-1}} = 0.$$ 
Consider the equation for $p_n$: 
\[ \Op p_n + \alpha_k p_n = \rho_n p_{n-1} + r_{n-1}. \]
Taking the inner product with $\Op p_j$, for any $1 \leq j \leq n-2$, leads to 
\be\label{id2} \skp{\Op p_n,\Op p_j} + \alpha_k \skp{p_n,\Op p_j}  = 0. \ee
Indeed, this follows since $\skp{p_{n-1},\Op p_j} = 0$ by induction hypothesis, and, 
since $\Op p_j \subset \KRS^{n-1}$ by Theorem~\ref{th2} and \eqref{stru}, it follows that 
$$\skp{r_{n-1},\Op p_j} = \skp{r_{n-1}, \spa\{p_1,\ldots, p_{n-1}\}} = 0$$
 again by the induction  hypothesis. Thus, \eqref{id2} holds. 

Define the matrix/operator 
\[ \POP{n-1} = [p_1 \cdots p_{n-1}], \]
which maps $\R^{n-1}$ to $\KRS^{n-2}$. It follows from \eqref{id2} that 
for any coefficient vector $\vec{c} \in \R^{n-2}$
\be\label{id3} \skp{\Op p_{n-1} , \Op \POP{n-2} \vec{c}} + \alpha_k \skp{\Op p_n, \POP{n-2} \vec{c}} = 0.  \ee
By Theorem~\ref{th2} and \eqref{stru} again it follows that 
$\Op \POP{n-2} \vec{c} \in \KRS^{n-1}$, thus there exists a vector $\vg \in \R^{n-2}$,and $\vgm_{n-1} \in \R$ 
 with 
\be\label{id4} \Op \POP{n-2} \vec{c} = \POP{n-2} \vg + \vgm_{n-1} p_{n-1} \ee
since the $p_i$, $i=1,\ldots, n-1$, span $\KRS^{n-1}$. Hence, inserting this in \eqref{id3}
and noting $\skp{\Op p_{n},p_{n-1}}=0$ gives 
\be\label{id5}  \skp{\Op p_{n} , \POP{n-2} \vg + \alpha_k  \POP{n-2} \vec{c}} = 0 . \ee
Taking the inner product with $\Op \POP{n-2}$, we   get from  \eqref{id4}
\be  {\POP{n-2}}^T \Op  \Op \POP{n-2} \vec{c} = {\POP{n-2}}^T  \Op \POP{n-2} \vg, \ee 
where we again used the induction hypothesis $\skp{\Op p_j, p_{n-1}} = 0$ and
where ${\POP{n-1}}^T$ denotes the transposed operator. The matrix on the right-hand side 
is invertible (in fact it is diagonal by the induction hypothesis) and nonsingular since 
$\Op$ has no nullspace on $\KRS^{n-2}$. Thus, we can invert to find 
\[ \vg =  ({\POP{n-2}}^T  \Op \POP{n-2})^{-1} {\POP{n-2}}^T \Op  \Op \POP{n-2} \vec{c}, \]
and inserting this into \eqref{id5} gives 
 \be\label{id6} \skp{\Op p_{n} ,  [\POP{n-2} ({\POP{n-2}}^T  \Op \POP{n-2})^{-1} {\POP{n-2}}^T \Op  \Op \POP{n-2} \vec{c}+ \alpha_k \POP{n-2} \vec{c}]} = 0 . \ee
Now for any  $j\in \{1,\ldots,n-2\}$ we find a vector $\vec{c} \in \R^{n-2}$ such that 
\[ [({\POP{n-2}}^T  \Op \POP{n-2})^{-1} {\POP{n-2}}^T \Op  \Op \POP{n-2} + \alpha_k I ]\vec{c} = e_j, \]
Indeed, this identity can be rewritten as 
 \[  {\POP{n-2}}^T \Op  \Op \POP{n-2}+ \alpha_k ({\POP{n-2}}^T  \Op \POP{n-2}) ] \vec{c} = ({\POP{n-2}}^T  \Op \POP{n-2}) e_j.\]
The matrix on the left-hand side is a sum of positive semidefinite and positive define matrices, hence 
invertible, and  such a vector $\vec{c}$ exits. Inserting that into \eqref{id6} yields 
\[ \skp{\Op p_{n} , \POP{n-2} e_j}  = 0, \]
and thus $\skp{\Op p_{n}, p_{j}} = 0$ by the definition of $\POP{n-2}$. 
Hence the orthogonality for $p_n$ is proven. The orthogonality for $r_n$ in \eqref{verify} follows by 
the induction hypothesis, the update formula for $r_n$, the definition of $\eta$ and the 
just proven orthogonality relation for $p_n$. 
Together, \eqref{verify} is proven. 

The statement that $x_n$ is a least-squares solution in $\KRS^n$ is a now a simple consequence 
of the facts that $x_n \in \KRS^n$, that $p_i$ span $\KRS^n$,  and the orthogonality 
relations for the residual, which together implies $\skp{r_n,\KRS^n} = 0$. The latter is just the least-squares optimality condition in $\KRS^{n}$. 
We also note that in case of a breakdown the same conclusion is valid as well, except that the sequence 
of optimal $x_n$ saturates at the breakdown-index. If we extend the definition of the solution sequence $x_n$ in Algorithm~\ref{alg:ratcg} 
as the last one computed $x$ before breakdown, this still yields the least-squares solution even in this case. 
\end{proof}

\begin{remark}
Since the Lanczos iteration Algorithm~\ref{alg:lanz} also 
computes a least-squares solution in $\KRS^n$ and since by out assumptions the $x_n$ are uniquely defined, 
it follows that the sequences $x_n$ from  Algorithm~\ref{alg:lanz} and Algorithm~\ref{alg:ratcg}
are identical in exact arithmetic. (The corresponding $p_n$ might differ, though.)
\end{remark}

\subsection{Regularization and stopping rule}
The derived Algorithms~\ref{alg:lanz} and \ref{alg:ratcg} and also the aggregation method 
above shares some conceptional similarities with the CGNE method as they are 
(generalized) Krylov-space methods. In particular, all are nonlinear methods in the 
data~$y$. Note that the CGNE method without stopping rule is even  discontinuous 
in~$y$~\cite{EHN}, and we expect the same to be true for the stated method (including the aggregation method). However,  
it has been shown by Nemirovskii \cite{Nem1,Nem2} that the CGNE method with the discrepancy principle is a regularization method in the classical sense \cite{EHN,Hankebook}.
Thus, showing that the algorithms are regularization methods is most probably impossible without 
including a stopping criterion. 

 Therefore, by analogy, we include in the 
algorithms a discrepancy stopping rule and terminate the method for the first iteration index $n$, where 
\be\label{discprin} \|A x_n  - y\| \leq \tau \delta, \ee
where $\delta$ is the known noise level and $\tau >1$ is a tuning parameter, e.g., $\tau = 1.1$. 
This residual can be easily calculated after  $x_n$ is known, and the for-loop in the algorithms has 
to be terminated if the stopping criterion is satisfied. 
As mentioned above, a proof that this provides a regularization method is outside the scope of this work.

\section{Numerical Results}
We test the rational CG algorithm, Algorithm~\ref{alg:ratcg}, the rational Lanczos method, Algorithm~\ref{alg:lanz},
the aggregation method,  \eqref{aggmeth} (referred to as ``rational methods'')
and compare them with the classical conjugate gradient method 
for the normal equation CGNE as given, e.g., in \cite{EHN}. As simple test cases we used ten problems from 
the well-known Hansen Regularization Toolbox \cite{Hansentool}:
baart(2000), blur(60), deriv2(1000), gravity(1000), heat(1000), phillips(1000), shaw(1000), spikes(1000), wing(1000),
tomo(35) and their default exact solutions. In case of no noise  we use the default data, in case of 
a nonzero noiselevel, we added standard normal distributed random noise to the data
(by Matlab's randn command). In the latter case, all
algorithms were stopped using a discrepancy principle with $\tau = 1.01$. All problems have matrices 
with sizes of the order of $10^3 \times 10^3$. The ``blur'' and ``tomo'' examples have sparse matrices.

At first we test the performance for the noise-free case using  the exact toolbox data. The sequence of 
regularization parameters $\alpha_i$ was for all problems set as exponentially decreasing:
\be\label{alphachoice} \alpha_i = 10^{-i-1}. \ee
In Figure~\ref{fig1}, we display the least-squares residual $\|A x_n -y\|$ and the 
error $\|x_n -x_{exact}\|$ for the ``tomo'' problem and for the three proposed methods and the CGNE method. 
In the figure, 
the circles correspond to the values of the aggregation method with the same $\alpha_i$, i.e., 
the results for the aggregation method with a number of $k$ Tikhonov regularizations $x_{\alpha_i}$ 
are plotted at $n = 2k$, which is the index, where the rational CG and the rational Lanczos method need the same number of Tikhonov solves as the aggregation method. 
\begin{figure}
 \includegraphics[width=0.49\textwidth]{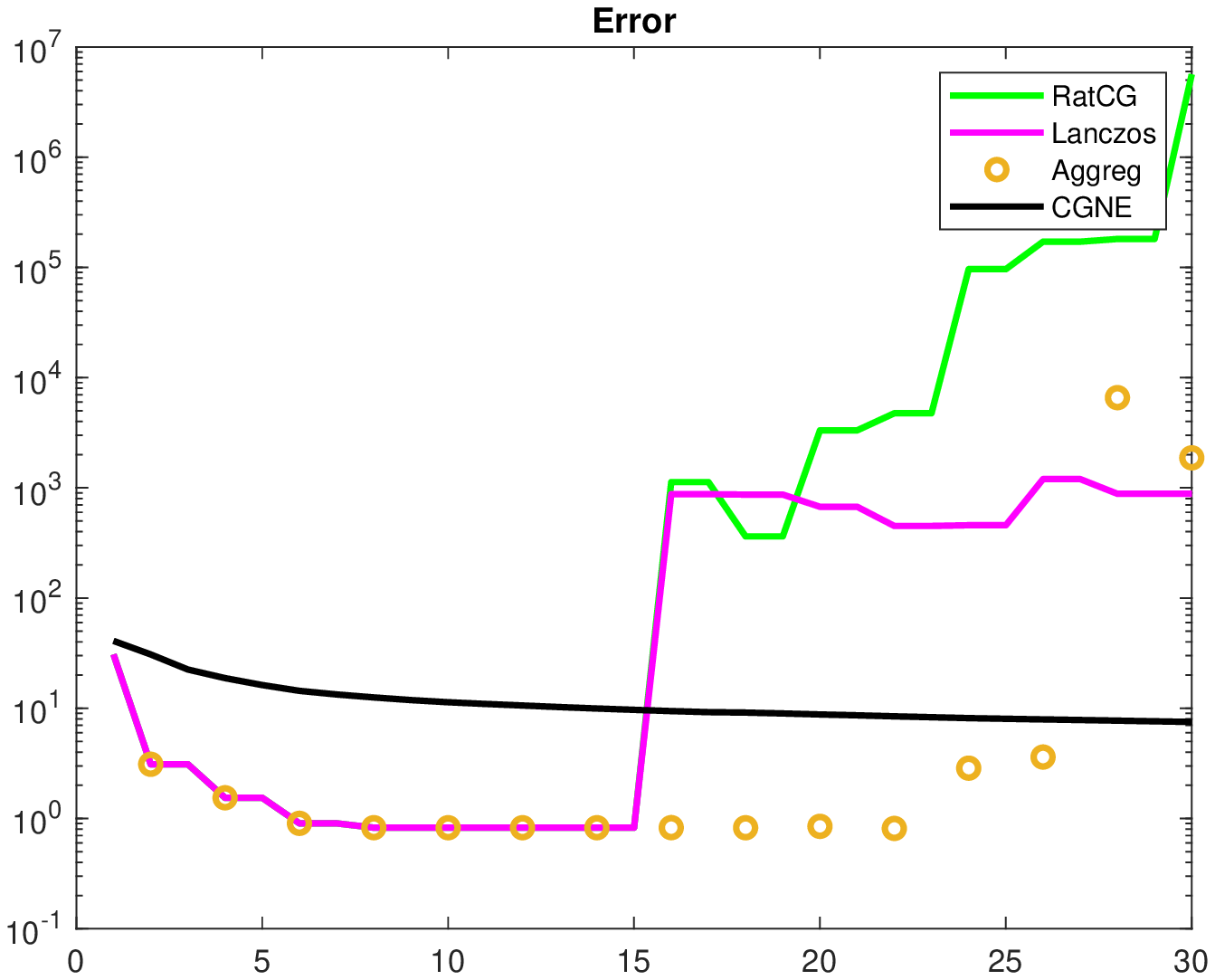}
  \includegraphics[width=0.49\textwidth]{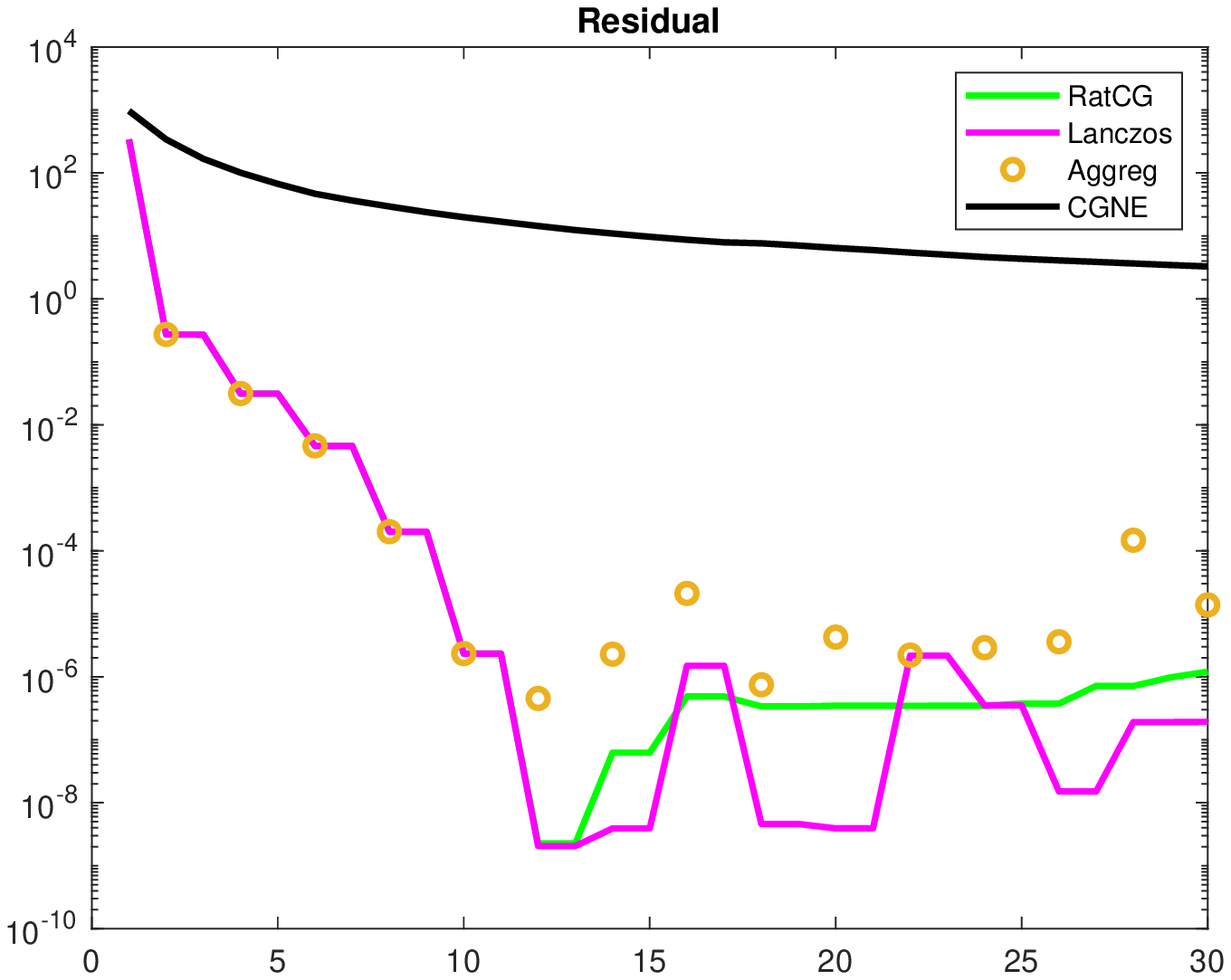}
  \caption{Logarithm of the error $\|x_n -x_{exact}\|$ and the residual $\|A x_n -y\|$ versus the iteration number for the rational CG method (Alg.~\ref{alg:ratcg},
  the Lanczos method (Alg.~\ref{alg:lanz}), the aggregation method \eqref{aggmeth}, and the 
  conjugate gradient method for the normal equation (CGNE). The circles represent 
  values for the aggregation method, plotted at the index $n = 2k$, where $k$ is the dimension 
  of the aggregation space (number of Tikhonov solutions). The noiselevel is $0$. 
  }\label{fig1}
\end{figure}

The general observation that seems to be true throughout is that the results for the 
rational CG method and the rational Lanczos method are in general identical (as predicted by the theory) as long 
as the $\alpha_i$ are not too small, but the method differ when we are in the realm of ill-conditioning (for  $\alpha$ too small). Note that the theory assumes exact arithmetic, which is no longer true when rounding error play a significant role. However, the difference of the two methods usually occurs  beyond a reasonable stopping rule. 
We also note that for all tested examples, the residual is smaller than that of the CGNE
as long as we are not in the ill-conditioning region. In many cases, the 
error for rational CG/Lanczos method agree with the corresponding ones 
of the aggregation method in the ``reasonable'' region. 
Note that the decay of the residual for 
CGNE is much slower, but of course, one has to take into account that each iteration of the 
method is of different complexity. 

In Table~\ref{tab1}, we present the result of the error $\|x_n -x_{exact}\|$, 
the computation time in seconds, and the number of iteration for the various methods 
and for the noise-free case. In order to find an appropriate $n$ in this case we first run the problem each up to a given maximal number of iteration and then choose that index $n$ 
where the error is the smallest (a kind of ``oracle'' stopping rule). The results are given in 
Table~\ref{tab1} together with the total running time (using Matlab's tic/toc command). 
The value of $5000$ for the CGNE method is the maximal number of used iteration.  

\begin{table}
\setlength{\tabcolsep}{3pt}   
\begin{center}
{\footnotesize
 \caption{Error $\|x_n -x_{exact}\|$, computation time (Time (s)), and number of iterations 
 for various problems form the Regularization Toolbox for the aggregation method \eqref{aggmeth},
  the Lanczos method (Alg.~\ref{alg:lanz}), the aggregation method, the rational CG
  method  (Alg.~\ref{alg:ratcg}), and the CGNE method. 5000 is $n_{max}$ for CGNE. 
  $\alpha_k$ chosen by \eqref{alphachoice}.}\label{tab1}
\begin{tabular}{cc||llll}
Problem&  & Aggreg. & Lanczos & RationalCG & CGNE \\ \hline 
baart & Error & 5.85E-02 &  6.58E-02 &  6.06E-02 & 2.47E-02 \\  
  & Time & 2.57 &  0.33 &  0.86 & 0.47 \\  
  & Iter. &     1 &      6 &     15 &   125 \\  \hline 
blur & Error & 3.13E-07 &  2.81E-12 &  2.88E-12 & 1.15E-13 \\  
  & Time & 4.84 &  2.49 &  2.32 & 0.11 \\  
  & Iter. &     1 &     13 &     13 &   575 \\    \hline 
deriv2 & Error & 7.41E-04 &  6.97E-04 &  9.17E-05 & 8.43E-03 \\  
  & Time & 0.71 &  0.50 &  0.27 & 1.24 \\  
  & Iter. &     1 &     77 &     34 &  5000 \\    \hline 
gravity & Error & 7.50E-03 &  4.48E-03 &  5.33E-03 & 3.70E-05 \\  
  & Time & 0.29 &  0.11 &  0.15 & 1.13 \\  
  & Iter. &     1 &     18 &     18 &  5000 \\    \hline 
heat & Error & 1.41E-02 &  1.18E-02 &  1.20E-02 & 1.40E-02 \\  
  & Time & 0.46 &  0.14 &  0.18 & 1.11 \\  
  & Iter. &     1 &     25 &     25 &  5000 \\    \hline 
phillips & Error & 1.89E-04 &  4.61E-05 &  5.71E-05 & 2.40E-05 \\  
  & Time & 0.28 &  0.11 &  0.13 & 1.12 \\  
  & Iter. &     1 &     17 &     16 &  5000 \\    \hline 
shaw & Error & 2.33E-01 &  2.31E-01 &  2.22E-01 & 2.86E-03 \\  
  & Time & 0.36 &  0.07 &  0.16 & 1.11 \\  
  & Iter. &     1 &     13 &     21 &  5000 \\    \hline 
spikes & Error & 2.60E+01 &  2.60E+01 &  2.60E+01 & 2.59E+01 \\  
  & Time & 0.24 &  0.10 &  0.14 & 1.01 \\  
  & Iter. &     1 &     17 &     17 &  5000 \\    \hline 
wing & Error & 1.90E-01 &  2.57E-01 &  1.90E-01 & 1.80E-01 \\  
  & Time & 0.32 &  0.04 &  0.06 & 0.03 \\  
  & Iter. &     1 &      5 &      7 &    83 \\    \hline 
tomo & Error & 8.39E-01 &  8.39E-01 &  8.39E-01 & 1.18E+00 \\  
  & Time & 0.63 &  0.22 &  0.29 & 0.74 \\  
  & Iter. &     1 &     13 &     12 &  5000 \\  
\end{tabular}
}
\end{center}
\end{table}

We observe that the rational CG and the Lanczos method perform roughly the same
and outperform the aggregation method both in terms of error and time. The CGNE method 
has a smaller error in 7 cases but requires less time in only 3 cases.

The next experiments concerns the case of nonzero noiselevel. The results 
for two noiselevels $\delta = 1\%, 0.1\%$ 
(using the discrepancy stopping rule) are given in Table~\ref{tab2}. 
\begin{table}
\setlength{\tabcolsep}{3pt}    
\begin{center}
{\scriptsize
 \caption{Error $\|x_n -x_{exact}\|$, computation time (Time (s)), and number of iterations 
 for various problems form the Regularization Toolbox for the aggregation method \eqref{aggmeth},
  the Lanczos method (Alg.~\ref{alg:lanz}), the rational CG
  method  (Alg.~\ref{alg:ratcg}), and the CGNE method and for two noiselevels $\delta = 1\%$ 
  and $0.1\%$. Stopping rule by the discrepancy principle with $\tau = 1.01$. }\label{tab2}
\begin{tabular}{c|cc||llll}
Problem & & & Aggreg. & Lanczos & Rat.CG & CG \\ \hline 
baart & $\delta=1\%$ & Error & 3.23E-01 &  3.20E-01 &  3.20E-01 & 2.09E-01 \\  
  &   & Time &  0.176 &  0.160 &  0.174 & 0.020 \\  
  &   & Iter. &       1 &      2 &      2 &     3 \\  
  & $\delta=0.1\%$ & Error & 2.06E-01 &  2.08E-01 &  2.08E-01 & 2.08E-01 \\  
  &   & Time &  0.353 &  0.165 &  0.181 & 0.020 \\  
  &   & Iter. &       1 &      3 &      3 &     3 \\   \hline 
blur & $\delta=1\%$ & Error & 2.39E+00 &  4.63E+00 &  4.63E+00 & 5.23E+00 \\  
  &   & Time &  0.733 &  0.327 &  0.342 & 0.004 \\  
  &   & Iter. &       1 &      2 &      2 &     9 \\  
  & $\delta=0.1\%$ & Error & 1.16E+00 &  1.09E+00 &  1.09E+00 & 1.36E+00 \\  
  &   & Time &  0.745 &  0.713 &  0.785 & 0.010 \\  
  &   & Iter. &       1 &      4 &      4 &    36 \\   \hline 
deriv2 & $\delta=1\%$ & Error & 1.66E-01 &  1.88E-01 &  1.88E-01 & 1.88E-01 \\  
  &   & Time &  0.115 &  0.036 &  0.037 & 0.005 \\  
  &   & Iter. &       1 &      4 &      4 &     4 \\  
  & $\delta=0.1\%$ & Error & 1.22E-01 &  1.20E-01 &  1.20E-01 & 1.22E-01 \\  
  &   & Time &  0.187 &  0.070 &  0.076 & 0.007 \\  
  &   & Iter. &       1 &      8 &      8 &     9 \\   \hline 
gravity & $\delta=1\%$ & Error & 7.19E-01 &  7.18E-01 &  7.18E-01 & 1.75E+00 \\  
  &   & Time &  0.037 &  0.030 &  0.031 & 0.003 \\  
  &   & Iter. &       1 &      2 &      2 &     4 \\  
  & $\delta=0.1\%$ & Error & 5.23E-01 &  5.22E-01 &  5.22E-01 & 8.06E-01 \\  
  &   & Time &  0.040 &  0.027 &  0.026 & 0.003 \\  
  &   & Iter. &       1 &      2 &      2 &     6 \\   \hline 
heat & $\delta=1\%$ & Error & 1.11E+00 &  9.92E-01 &  9.92E-01 & 1.30E+00 \\  
  &   & Time &  0.075 &  0.059 &  0.056 & 0.005 \\  
  &   & Iter. &       1 &      6 &      6 &     9 \\  
  & $\delta=0.1\%$ & Error & 4.16E-01 &  4.02E-01 &  4.02E-01 & 5.02E-01 \\  
  &   & Time &  0.150 &  0.068 &  0.074 & 0.008 \\  
  &   & Iter. &       1 &      8 &      8 &    18 \\   \hline 
phillips\hspace{-3mm} & $\delta=1\%$ & Error & 3.96E-02 &  3.94E-02 &  3.94E-02 & 7.98E-02 \\  
  &   & Time &  0.028 &  0.024 &  0.024 & 0.002 \\  
  &   & Iter. &       1 &      2 &      2 &     4 \\  
  & $\delta=0.1\%$ & Error & 3.77E-02 &  3.76E-02 &  3.76E-02 & 7.30E-02 \\  
  &   & Time &  0.027 &  0.025 &  0.024 & 0.002 \\  
  &   & Iter. &       1 &      2 &      2 &     4 \\   \hline 
shaw & $\delta=1\%$ & Error & 4.16E+00 &  4.15E+00 &  4.15E+00 & 5.25E+00 \\  
  &   & Time &  0.038 &  0.024 &  0.024 & 0.002 \\  
  &   & Iter. &       1 &      2 &      2 &     4 \\  
  & $\delta=0.1\%$ & Error & 1.63E+00 &  1.89E+00 &  1.89E+00 & 1.67E+00 \\  
  &   & Time &  0.110 &  0.038 &  0.041 & 0.003 \\  
  &   & Iter. &       1 &      5 &      5 &     6 \\   \hline 
spikes & $\delta=1\%$ & Error & 2.68E+01 &  2.68E+01 &  2.68E+01 & 2.67E+01 \\  
  &   & Time &  0.039 &  0.025 &  0.024 & 0.003 \\  
  &   & Iter. &       1 &      2 &      2 &     7 \\  
  & $\delta=0.1\%$ & Error & 2.62E+01 &  2.62E+01 &  2.62E+01 & 2.63E+01 \\  
  &   & Time &  0.047 &  0.025 &  0.025 & 0.004 \\  
  &   & Iter. &       1 &      2 &      2 &    13 \\   \hline 
wing & $\delta=1\%$ & Error & 3.48E-01 &  3.48E-01 &  3.48E-01 & 3.48E-01 \\  
  &   & Time &  0.062 &  0.028 &  0.027 & 0.002 \\  
  &   & Iter. &       1 &      2 &      2 &     2 \\  
  & $\delta=0.1\%$ & Error & 3.48E-01 &  3.48E-01 &  3.48E-01 & 3.48E-01 \\  
  &   & Time &  0.056 &  0.024 &  0.024 & 0.001 \\  
  &   & Iter. &       1 &      2 &      2 &     2 \\   \hline 
tomo & $\delta=1\%$ & Error & 6.31E+00 &  6.31E+00 &  6.31E+00 & 8.78E+00 \\  
  &   & Time &  0.105 &  0.078 &  0.083 & 0.006 \\  
  &   & Iter. &       1 &      2 &      2 &    16 \\  
  & $\delta=0.1\%$ & Error & 3.10E+00 &  3.10E+00 &  3.10E+00 & 5.03E+00 \\  
  &   & Time &  0.105 &  0.075 &  0.091 & 0.018 \\  
  &   & Iter. &       1 &      2 &      2 &    76 \\  
\end{tabular}
  }
  \end{center}
\end{table}
It can be seen that the Lanczos and rational CG method are about 
similar in behaviour. The running time 
of the CGNE method cannot be beaten by any of the rational methods.
The rational method need about 10 times more running time than CGNE. 
However, compared to  CGNE, we observe  that in a
majority of cases the proposed methods outperform 
the CGNE method in terms of the error. One reason for this is that 
the CGNE does not allow for fine-tuning of the regularization parameter 
(which is the iteration index in this case). An optimal stopping index for CGNE
would be ``in between'' two iterations, whatever that should mean.

The next figures concern the convergence rates of the discussed rational methods, i.e., aggregation, rational Lanczos, and 
the rational CG method, which are compares with the rates of the classical CGNE method. 
In Figure~\ref{fig2} we display 
the error $\|x_n -x^\dagger\|$ against various noiselevel $\delta$ on a log-log plot for all methods for four cases: 
First for the problem ``deriv2'' 
with the default true solution $x_{exact}$ and then again with a smoother solution, which is 
simply calculated by $x_{smooth} = A^*A x_{exact}$. This automatically implies a higher convergence 
rates for the later case because a higher source condition is satisfied. We do the same for the 
problem ``tomo'', i.e., default solution and smooth solution. The noise is generated again 
by samples from a standard normal 
random distribution.

The results for the two test cases for ``deriv2'' are displayed together on 
the left plot and that for ``tomo'' on the right. (The steeper slope corresponds to smoother solution). 
The reason for using smoother solution is to investigate whether the methods show a saturation in the 
convergence rates, which is known to happen for Tikhonov regularization. 
\begin{figure}
 \includegraphics[width=0.49\textwidth]{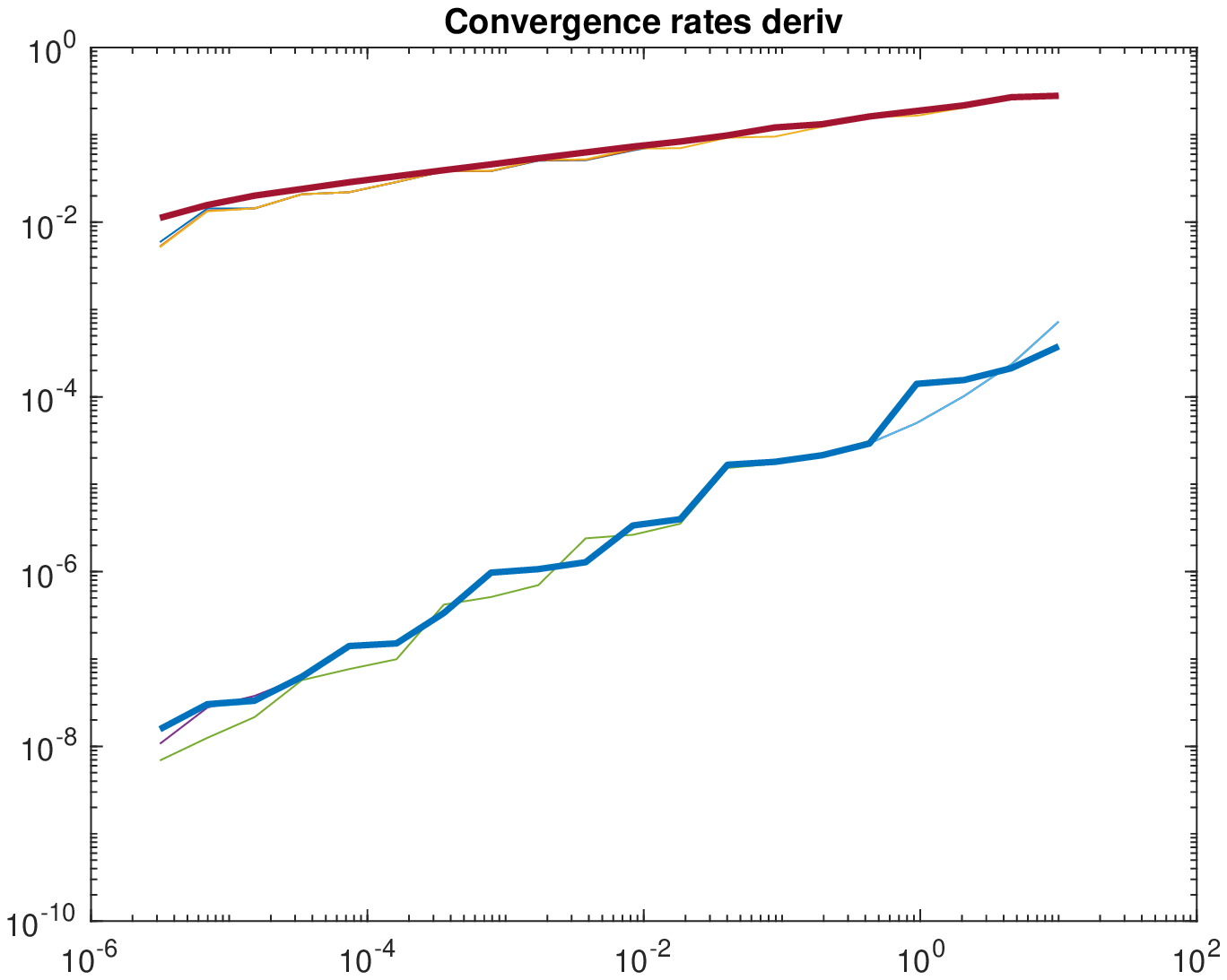}
  \includegraphics[width=0.49\textwidth]{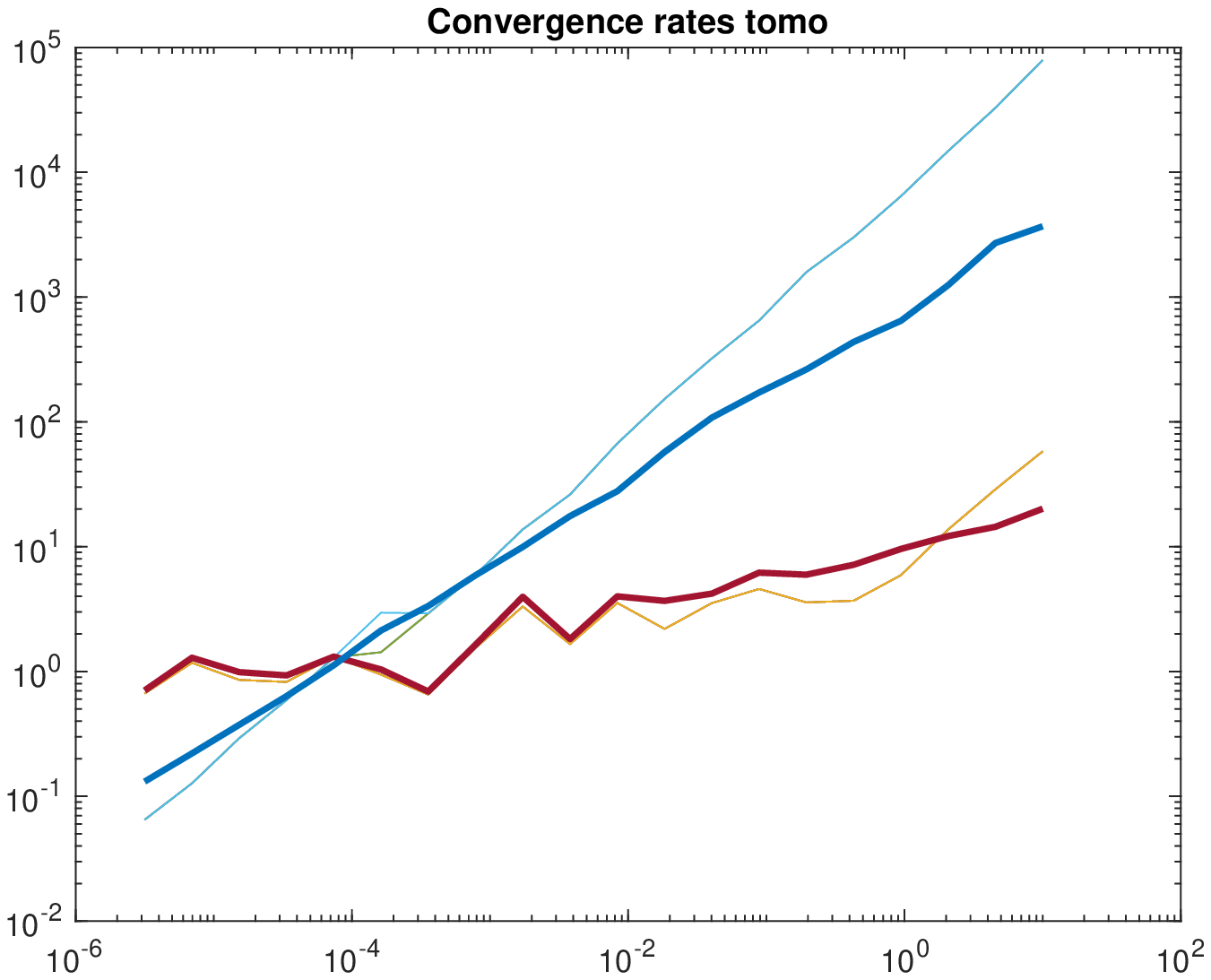}
  \caption{Convergence rates for the the aggregation method \eqref{aggmeth},
  the Lanczos method (Alg.~\ref{alg:lanz}), the rational CG
  method  (Alg.~\ref{alg:ratcg}), and the CGNE method for the  
  ``deriv2`` (left) and the ``tomo'' (right)  problems (overlayed plots). 
  Displayed is the error $\|x_n -x_{exact}\|$ vs. the noiselevel in a log-log plot for 
  two smoothness cases for each problem. (The steeper slopes correspond to the case of a smoother solution.)
  The error for CGNE is plotted by a thicker line as reference.
  }\label{fig2}
\end{figure}
For the ``deriv2'' problem, all  methods perform equally well, for the tomo problem this is true except for the 
CG method for the smooth solution, which has a slightly smaller slope (and thus a worse convergence rate). 
This figures should illustrate that all proposed methods show a similar (or even slightly better) rate than 
CGNE, which is known to achive the theoretically optimal-order rates. We observe that no saturation seems to happen, and we conjecture from the results that the rational methods are optimal-order 
method for all classical smoothness classes \cite{EHN}.

Finally, we tested the performance of the methods with respect to the choice of the sequence of regularization 
parameters. That is, we choose a geometrically decaying sequence of the form 
$\alpha_k = 0.1 q^{s -k}$, for various $q \in{2,4,6,8,10}$ and various starting values $s = -12, -10, \ldots, 8,10$. 
Thus, in the extreme cases, we start with a very large $\alpha$ or a very small one, and the $q$ controls the speed of the decay. We tested this for the ``tomo'' problem with $0.01 \%$ noise. Without details, we made the following 
observations: 
\begin{itemize}
 \item For most cases, the results were good, the speed of decay (choice of $q$) did not have much of  an influence. 
 \item The number of iterations is high (and the methods are slow) if we start with a very large $\alpha$, i.e., 
 far away from a reasonable good regularization parameter. 
 \item Starting with a too-small $\alpha$ (much below an ``optimal'' value) yields comparably 
 bad results (large error). 
       In this case, the aggregation, the  rational CG, and the Lanczos method  become unstable. However, 
       such a failure only happened for the extreme case $\alpha_k = 10^{-12 - k}$.  
 \item Starting with a too-large regularization parameter $\alpha$ leads to  stability 
 problems with the Lanczos method, for instance, when $\alpha_k = 10^{10 -k}$. This can be explained by the fact 
 that in such a case the Tikhonov inverse $(A^*A +\alpha I)^{-1} A^*y \sim \frac{1}{\alpha} A^*y  + O(\frac{1}{\alpha^2})$, 
is almost a scaled multiple of the first element in the Krylov space,
and thus the mixed Krylov space is close to 
 breakdown. The rational CG method behaved more robust in that respect. 
 \item As to be expected, if we start with an $\alpha$ that is already a good choice for classical Tikhonov regularization, then 
 the methods terminate after 2 iteration (i.e., after the first rational step) with good results. 
\end{itemize}

\subsubsection*{Further comments}
In terms of error we have verified an excellent performance of the rational methods, 
often even better than the CGNE method. The downside is, however, the additional computation 
time required. This, however, should not lead to  an a-priori refusal of these new methods. 
As soon as one admit Tikhonov regularization with a parameter choice search as useful methods, 
then the rational CG methods should be considered equally admissible, since it requires the 
same effort, has always a smaller residual, and, what is important, shows little sensitivity to the 
actually choice of regularization parameter, as long as we treat it as iterative method
coupled with a stopping rule.  

There is plenty of room for generalizing the methods and further investigations. We did not 
focus on tuning the rational methods; for instance, one can ease the Tikhonov inversions 
by invoking an a-priori factorization of the system matrix (for which even the rational Lanczos method
could be used). A stimulating piece of research would be to investigate the effect of 
an incomplete computation of the Tikhonov solutions. A extension of the method to the 
nonlinear case is highly interesting, but the modality is not obvious.

\section{Conclusion}
We derived the rational Lanczos and rational CG method for iteratively minimizing  linear least-squares problems over mixed rational Krylov spaces. We illustrate that these methods 
and the associated aggregation method perform equally well or better than the conjugate gradient method  for the normal equations in terms of the error. In terms of runtime, the CGNE method 
cannot be beaten but we did not attempt to improve this by further tuning. The main novelty is 
the rational CG method that requires only short recursions, with Tikhonov regularization in 
each second step, and nearly no additional memory requirements.


\begin{thebibliography}{10}

\bibitem{Anderson}
{\sc D.~G. Anderson}, {\em Iterative procedures for nonlinear integral
  equations}, J. Assoc. Comput. Mach., 12 (1965), pp.~547--560.

\bibitem{ChenPere15}
{\sc J.~Chen, S.~Pereverzyev, Jr., and Y.~Xu}, {\em Aggregation of regularized
  solutions from multiple observation models}, Inverse Problems, 31 (2015),
  pp.~075005, 23.

\bibitem{EHN}
{\sc H.~W. Engl, M.~Hanke, and A.~Neubauer}, {\em Regularization of inverse
  problems}, vol.~375 of Mathematics and its Applications, Kluwer Academic
  Publishers Group, Dordrecht, 1996.

\bibitem{Regtools}
{\sc S.~Gazzola, P.~C. Hansen, and J.~G. Nagy}, {\em I{R} {T}ools: a {MATLAB}
  package of iterative regularization methods and large-scale test problems},
  Numer. Algorithms, 81 (2019), pp.~773--811.

\bibitem{agglearning}
{\sc E.~R. Gizewski, L.~Mayer, B.~A. Moser, D.~H. Nguyen, S.~Pereverzyev, Jr.,
  S.~V. Pereverzyev, N.~Shepeleva, and W.~Zellinger}, {\em On a regularization
  of unsupervised domain adaptation in {RKHS}}, Appl. Comput. Harmon. Anal., 57
  (2022), pp.~201--227.

\bibitem{Grimm}
{\sc V.~Grimm}, {\em A conjugate-gradient-type rational {K}rylov subspace
  method for ill-posed problems}, Inverse Problems, 36 (2020), pp.~015008, 19.

\bibitem{Guttel}
{\sc S.~G\"{u}ttel}, {\em Rational {K}rylov approximation of matrix functions:
  numerical methods and optimal pole selection}, GAMM-Mitt., 36 (2013),
  pp.~8--31.

\bibitem{Hankebook}
{\sc M.~Hanke}, {\em Conjugate gradient type methods for ill-posed problems},
  vol.~327 of Pitman Research Notes in Mathematics Series, Longman Scientific
  \& Technical, Harlow, 1995.

\bibitem{HaGr}
{\sc M.~Hanke and C.~W. Groetsch}, {\em Nonstationary iterated {T}ikhonov
  regularization}, J. Optim. Theory Appl., 98 (1998), pp.~37--53.

\bibitem{Hansentool}
{\sc P.~C. Hansen}, {\em Regularization tools: a {M}atlab package for analysis
  and solution of discrete ill-posed problems}, Numer. Algorithms, 6 (1994),
  pp.~1--35.

\bibitem{cg}
{\sc M.~R. Hestenes and E.~Stiefel}, {\em Methods of conjugate gradients for
  solving linear systems}, J. Research Nat. Bur. Standards, 49 (1952),
  pp.~409--436 (1953).

\bibitem{PeKiPi}
{\sc S.~Kindermann, S.~Pereverzyev, Jr., and A.~Pilipenko}, {\em The
  quasi-optimality criterion in the linear functional strategy}, Inverse
  Problems, 34 (2018), pp.~075001, 24.

\bibitem{aggregation}
{\sc G.~L. Myleiko, S.~Pereverzyev, Jr., and S.~G. Solodky}, {\em Regularized
  {N}ystr\"{o}m subsampling in regression and ranking problems under general
  smoothness assumptions}, Anal. Appl. (Singap.), 17 (2019), pp.~453--475.

\bibitem{Nem2}
{\sc A.~Nemirovskii}, {\em The regularizing properties of the adjoint gradient
  method in ill-posed problems}, USSR Computational Mathematics and
  Mathematical Physics, 26 (1986), pp.~7--16.

\bibitem{Nem1}
{\sc A.~S. Nemirovski\u{\i}}, {\em Regularizing properties of the conjugate
  gradient method in ill-posed problems}, Zh. Vychisl. Mat. i Mat. Fiz., 26
  (1986), pp.~332--347, 477.

\bibitem{Pereveryzevbook}
{\sc S.~Pereverzyev}, {\em An introduction to artificial intelligence based on
  reproducing kernel {H}ilbert spaces}, Compact Textbooks in Mathematics,
  Birkh\"{a}user/Springer, Cham, [2022] \copyright 2022.

\bibitem{prarei}
{\sc M.~S. Prani\'{c} and L.~Reichel}, {\em Recurrence relations for orthogonal
  rational functions}, Numer. Math., 123 (2013), pp.~629--642.

\bibitem{prareirod}
{\sc M.~S. Prani\'{c}, L.~Reichel, G.~Rodriguez, Z.~Wang, and X.~Yu}, {\em A
  rational {A}rnoldi process with applications}, Numer. Linear Algebra Appl.,
  23 (2016), pp.~1007--1022.

\bibitem{Ruhe}
{\sc A.~Ruhe}, {\em Rational {K}rylov sequence methods for eigenvalue
  computation}, Linear Algebra Appl., 58 (1984), pp.~391--405.

\bibitem{saadbook}
{\sc Y.~Saad}, {\em Iterative methods for sparse linear systems}, Society for
  Industrial and Applied Mathematics, Philadelphia, PA, second~ed., 2003.

\bibitem{Tka}
{\sc P.~Tkachenko}, {\em Regularization by aggregation of global and local data
  on the sphere}, Comput. Methods Appl. Math., 16 (2016), pp.~299--307.

\end{thebibliography}
\end{document}